\newcommand{\rmPi}{\mathrm{\Pi}}
\newcommand{\iid}{\stackrel{iid}{\sim}}
\newcommand{\ind}{\stackrel{ind}{\sim}}
\newcommand{\m}{\boldsymbol{\mathrm{m}}}
\newcommand{\Y}{\boldsymbol{\mathrm{Y}}}
\newcommand{\X}{\boldsymbol{\mathrm{X}}}
\newcommand{\nn}{\nonumber}
\newcommand{\rmGamma}{\mathrm{\Gamma}}
\newtheorem{theorem}{Theorem}
\newtheorem{lemma}{Lemma}
\newtheorem{corollary}{Corollary}
\newtheorem{definition}{Definition}
\def\<{\langle}
\def\>{\rangle}
\def\({\left(}
\def\){\right)}
\def\[{\left[}
\def\]{\right]}
\def\rm{\mathrm}
\def\E{\mathbb{E}}
\def\e{\mbox{e}}
\def\BP{\mbox{BP}}
\def\R{\mathbb{R}}
\def\I{\mathbb{I}}
\begin{document}

\begin{frontmatter}

\title{A Constructive Definition of the Beta Process}

\runtitle{A Constructive Definition of the Beta Process}

\begin{aug}
\author{\fnms{John} \snm{Paisley}\thanksref{a,e1}\ead[label=e1,mark]{jpaisley@columbia.edu}}
\and
\author{\fnms{Michael} \snm{I. Jordan}\thanksref{b,e2}\ead[label=e2,mark]{jordan@stat.berkeley.edu}}

\address[a]{Department of Electrical Engineering, Columbia University\\
\printead{e1}}

\address[b]{Department of Statistics and Department of EECS, University of California, Berkeley
\printead{e2}}

\runauthor{Paisley and Jordan}

\affiliation{Columbia University and University of California, Berkeley}

\end{aug}

\begin{abstract}
We derive a construction of the beta process that allows for the atoms with significant measure to be drawn first. Our representation is based on an extension of the \cite{Sethuraman:1994} construction of the Dirichlet process, and therefore we refer to it as a stick-breaking construction. Our first proof uses a limiting case argument of finite arrays. To this end, we present a finite sieve approximation to the beta process that parallels that of \cite{Ishwaran:2002a} and prove its convergence to the beta process. We give a second proof of the construction using Poisson process machinery. We use the Poisson process to derive almost sure truncation bounds for the construction. We conclude the paper by presenting an efficient sampling algorithm for beta-Bernoulli and beta-negative binomial process models.
\end{abstract}

\begin{keyword}
\kwd{beta processes}
\kwd{Poisson processes}
\kwd{Bayesian nonparametrics}
\end{keyword}



\end{frontmatter}

\section{Introduction}

Stick-breaking constructions play an important role in Bayesian nonparametric models because they allow for the construction of infinite dimensional discrete measures with sparse weights \citep{Sethuraman:1994,Ishwaran:2001}. The canonical stick-breaking construction was derived by \cite{Sethuraman:1994} for the Dirichlet process \citep{Ferguson:1973}. According to this construction, given a probability measure $G_0$ and constant $\alpha > 0$, the random probability measure
\begin{equation}\label{eqn.DP}
 G = \sum_{i=1}^{\infty}V_i\prod_{j=1}^{i-1}(1-V_j)\delta_{\theta_i},\quad V_i\stackrel{iid}{\sim} \rm{Beta}(1,\alpha),\quad \theta_i \stackrel{iid}{\sim} G_0
\end{equation}
is a Dirichlet process, denoted $G \sim \mathrm{DP}(\alpha G_0)$. Dirichlet processes have a very developed literature in  Bayesian nonparametrics (see, e.g., \cite{BNP:2010}).

A recently growing research area within Bayesian nonparametrics has been on beta process priors and associated models. The beta process has been presented and developed in the statistics literature for applications in survival analysis \citep{Hjort:1990,Muliere:1997,Kim:2001,Lee:2004}. Recent developments in the statistical machine learning literature have found beta processes useful for nonparametric latent factor models \citep{Griffiths:2006,Thibaux:2007}; this perspective has found many applications \citep{Williamson:2010,Fox:2010,Paisley:2009,Zhou:2012b,Broderick:2012b}.

As with the Dirichlet process, representations are needed for working with the beta process. A marginalized representation for the beta-Bernoulli process called the Indian buffet process (IBP) \citep{Griffiths:2006} was recently presented that has the beta process as the underlying DeFinetti mixing measure \citep{Thibaux:2007}. Therefore, the IBP stands in similar relation to the beta process as the Chinese restaurant process does to the Dirichlet process \citep{Aldous:1985}. Recently, \cite{Paisley:2010} presented a method for explicitly constructing beta processes based on the notion of stick-breaking, and further developed these ideas in \cite{Paisley:2012}. In this paper we collect these results and also present new results on asymptotically correct finite approximations to the beta process, truncation bounds for working with the beta-negative Binomial process, and posterior sampling for the beta-Bernoulli and beta-negative binomial processes.

The paper is organized as follows: In Section \ref{sec.BP} we review the beta process and its connection to the Poisson process. We present a finite approximation to the beta process in Section \ref{sec.finite} and prove that it has the correct asymptotic distribution. We use this approximation in Section \ref{sec.stickbreaking} in our first proof of the stick-breaking construction of the beta process and present a second proof using the Poisson process machinery. In Section \ref{sec.truncations} we derive bounds on truncated stick-breaking approximations for models based on the beta-Bernoulli and beta-negative binomial processes. We then present simple posterior sampling schemes for these two processes in Section \ref{sec.MCMC}.

\section{Beta processes}\label{sec.BP}

Beta processes comprise a class of completely random measures \citep{Kingman:1967}. They are defined on an abstract measurable space $(\rm{\Theta},\mathcal{A})$, are almost surely discrete, and have the property that the mass of any particular atom lies in the interval $(0,1]$. The beta process has an underlying Poisson process driving it on the space $(\rm{\Theta} \times [0,1],\mathcal{A}\otimes\mathcal{B})$. We use this representation in the following definition.

\begin{definition}[Beta process]\label{def.bp1}
Let $N$ be a Poisson random measure on $(\rm{\Theta} \times [0,1],\mathcal{A}\otimes\mathcal{B})$ with mean measure $\nu(d\theta,d\pi) = \alpha(\theta)\pi^{-1}(1-\pi)^{\alpha(\theta)-1}d\pi\mu(d\theta)$, where $\mu$ is a diffuse $\sigma$-finite measure and the function $\alpha(\theta)$ is strictly positive and finite. For a compact set $A \in \mathcal{A}$, define the completely random measure
$$H(A) = \int_{A\times[0,1]} N(d\theta,d\pi)\pi.$$
Then $H$ is a beta process  with concentration $\alpha(\cdot)$ and base $\mu$, denoted $H \sim \BP(\alpha,\mu)$.
\end{definition}

The Poisson random measure $N(d\theta,d\pi)$ is a random counting measure such that for $S \in \mathcal{A}\otimes\mathcal{B}$, the distribution of $N(S)$ is Poisson with parameter $\nu(S)$. It is also completely random, since for any collection of pairwise disjoint sets $S_1,\dots,S_k \in \mathcal{A}\otimes\mathcal{B}$, the random variables $N(S_1),\dots,N(S_k)$ are independent, which by extension proves that $H$ is completely random \citep{Cinlar:2011}.

A fundamental aspect of the study of Poisson random measures is analyzing how they behave when integrated against a function---in this case studying the integral $\int N(d\theta,d\pi)f(\theta,\pi)$ over subsets of $\rm{\Theta}\times [0,1]$. Definition \ref{def.bp1} is the special case where $f(\theta,\pi) = \pi$ and the integral is taken over the entire interval $[0,1]$ in the $\pi$ dimension. It follows \citep{Cinlar:2011} that for $t < 0$ the Laplace functional of $H$ is
\begin{equation}\label{eqn.BP_laplace}
 \E\,\e^{tH(A)} = \exp\left\lbrace - \int_{A\times[0,1]}\nu(d\theta,d\pi)\(1-\e^{t\pi}\)\right\rbrace,
\end{equation}
with mean measure $\nu$ given in Definition \ref{def.bp1},
\begin{equation}\label{eqn.BP_mean_measure}
 \nu(d\theta,d\pi) = \alpha(\theta)\pi^{-1}(1-\pi)^{\alpha(\theta)-1}d\pi\mu(d\theta).
\end{equation}
The beta process is completely characterized by the mean measure of its underlying Poisson random measure. We divide this mean measure into two measures: A $\sigma$-finite and diffuse base measure $\mu$, and a transition probability kernel $\lambda(\theta,d\pi) = \alpha(\theta)\pi^{-1}(1-\pi)^{\alpha(\theta)-1}d\pi$, which is a measure on $[0,1]$ for each $\theta$ called the L\'{e}vy measure.

By defining the beta process as in Definition \ref{def.bp1} its existence immediately follows from the well-studied Poisson process machinery; since $\int_0^1 (|\pi|\wedge 1)\lambda(\theta,d\pi) = 1 < \infty$, by Campbell's theorem it follows that $H(A)$ is finite almost surely. The form of Equation (\ref{eqn.BP_laplace}) shows that $H$ is a pure-jump process, and the fact that $\lambda(\theta,[0,1]) = \infty$, but $\lambda(\theta,[\epsilon,1]) < \infty$ for all $\theta$ and $\epsilon > 0$ ensures that $H$ has an infinite number of jumps in any set $A\in\mathcal{A}$ for which $\mu(A)>0$, but only a finite number of magnitude greater than $\epsilon$.

Since $H$ is a pure-jump process with an infinite number of jumps, it can be written as a sum over delta measures; it will later be convenient to use two indices for this process. We can therefore write $H$ as either
\begin{equation}
 H = \sum_{i=1}^{\infty}\pi_i\delta_{\theta_i}, \quad \mbox{or equivalently} \quad H = \sum_{i=1}^{\infty}\sum_{j=1}^{C_i}\pi_{ij}\delta_{\theta_{ij}},
\end{equation}
where $C_i$ is a random variable that will appear later and is finite almost surely. We will also slightly abuse notation by letting $\theta\in H$ indicate an atom that has nonzero measure according to $H$.

As shown, a beta process is equivalently represented as a function of a Poisson random measure on the extended space $\rm{\Theta}\times [0,1]$. We indicate the set of atoms of this Poisson random measure by $\rmPi = \{(\theta,\pi)\}$, which is the set of locations with measure one according to $N$. The following two general lemmas about the resulting Poisson point process $\rmPi$ will be used later in this paper.

\begin{lemma}[Marked Poisson process (\cite{Cinlar:2011}, Theorem 6.3.2)]\label{lem.marked}
 Let $\rmPi^*$ be a Poisson process on $\rm{\Theta}$ with mean measure $\mu$. For each $\theta \in \rmPi^*$ associate a random variable $\pi$ drawn from a transition probability kernel $\lambda(\theta,\cdot)$ from $(\rm{\Theta},\mathcal{A})$ into $([0,1],\mathcal{B})$. Then the set $\rmPi = \{(\theta,\pi)\}$ is a Poisson process on $\rm{\Theta} \times [0,1]$ with mean measure $\mu(d\theta)\lambda(\theta,d\pi)$.
\end{lemma}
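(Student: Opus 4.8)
The plan is to verify the claim through the probability generating functional (p.g.f.l.), which characterizes the law of a point process. Fix a measurable function $f:\rm{\Theta}\times[0,1]\to[0,1]$ with $1-f$ supported on a set of finite measure, and consider $\E\prod_{(\theta,\pi)\in\rmPi}f(\theta,\pi)$. First I would condition on the underlying process $\rmPi^*$: since the marks $\pi$ attached to distinct points $\theta\in\rmPi^*$ are drawn independently from $\lambda(\theta,\cdot)$, the conditional expectation factorizes as $\prod_{\theta\in\rmPi^*}g(\theta)$, where $g(\theta):=\int_{[0,1]}f(\theta,\pi)\lambda(\theta,d\pi)$. Here it is essential that $\lambda(\theta,\cdot)$ is a \emph{probability} kernel, so that $g$ is itself $[0,1]$-valued and $1-g(\theta)=\int_{[0,1]}(1-f(\theta,\pi))\lambda(\theta,d\pi)$.

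Next I would apply the p.g.f.l. of the Poisson process $\rmPi^*$ with mean measure $\mu$, namely $\E\prod_{\theta\in\rmPi^*}g(\theta)=\exp\{-\int_{\rm{\Theta}}(1-g(\theta))\mu(d\theta)\}$, and substitute the expression for $1-g$. By Tonelli's theorem this yields
$$\E\prod_{(\theta,\pi)\in\rmPi}f(\theta,\pi)=\exp\Big\{-\int_{\rm{\Theta}\times[0,1]}\big(1-f(\theta,\pi)\big)\,\mu(d\theta)\lambda(\theta,d\pi)\Big\},$$
which is precisely the p.g.f.l. of a Poisson process on $\rm{\Theta}\times[0,1]$ with mean measure $\mu(d\theta)\lambda(\theta,d\pi)$. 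Since two point processes with the same p.g.f.l. on a sufficiently rich class of test functions have the same distribution, this identifies the law of $\rmPi$ and completes the argument.

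An equivalent route, if one prefers to avoid functional machinery, is to fix $A\in\mathcal{A}$ with $\mu(A)<\infty$, condition on $\{\rmPi^*(A)=n\}$ so that the $n$ points of $\rmPi^*$ in $A$ are i.i.d.\ $\mu(\cdot)/\mu(A)$, attach the marks to obtain $n$ i.i.d.\ pairs with law $\mu(d\theta)\lambda(\theta,d\pi)/\mu(A)$ on $A\times[0,1]$, and recognize the resulting binomial process mixed over a Poisson count as a Poisson process with the stated mean measure restricted to $A\times[0,1]$; independence over a countable $\mu$-finite partition of $\rm{\Theta}$ then extends this globally. The one point requiring care in either approach is the bookkeeping needed to reduce to test functions (or sets) of finite measure so that the interchanges and the p.g.f.l.\ identity are legitimate; the diffuseness and $\sigma$-finiteness of $\mu$ make this routine. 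The substance of the statement is classical---it is exactly Çinlar's marking theorem---so the argument here is really a reminder of why marking a Poisson process with a probability kernel produces a Poisson process on the product space.
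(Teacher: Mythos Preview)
Your argument is correct and is the standard route to the marking theorem: condition on the base process, use independence of the marks to factor the p.g.f.l., then apply the p.g.f.l.\ formula for $\rmPi^*$ and Tonelli to identify the mean measure on the product space. The alternative you sketch via conditioning on $\rmPi^*(A)=n$ is also fine and is essentially how Kingman proves it.

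There is nothing to compare against, however: the paper does not prove this lemma. It is stated with a citation to \c{C}inlar's Theorem~6.3.2 and used as a black box in the second proof of Theorem~\ref{thm.stickBP}. So your proposal supplies strictly more than the paper does here, and you have correctly identified that the content is classical.
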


\begin{lemma}[Superposition property (\cite{Kingman:1993}, Ch. 2, Sec. 2)]\label{lem.superposition}
 Let $\rmPi_1, \rmPi_2,\dots$ be a countable collection of independent Poisson processes on $\rm{\Theta}\times [0,1]$. Let $\rmPi_i$ have mean measure $\nu_i$. Then the superposition $\rmPi = \bigcup_{i=1}^{\infty} \rmPi_i$ is a Poisson process with mean measure $\nu = \sum_{i=1}^{\infty} \nu_i$.
\end{lemma}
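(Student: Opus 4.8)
\emph{Proof idea.} This is the classical superposition theorem of Kingman, so the plan is simply to reconstruct the standard argument: show that $\rmPi$ satisfies the two defining properties of a Poisson process with mean measure $\nu=\sum_{i=1}^\infty\nu_i$, namely that $\rmPi(A)\sim\mathrm{Poisson}(\nu(A))$ for every measurable $A$ and that $\rmPi(A_1),\dots,\rmPi(A_k)$ are independent whenever $A_1,\dots,A_k$ are pairwise disjoint. It is cleanest to identify each $\rmPi_i$ with its counting measure $N_i$, set $N=\sum_{i=1}^\infty N_i$ so that $\rmPi=\bigcup_{i=1}^\infty\rmPi_i$ is the support of $N$, and argue through generating functions; this also sidesteps the question of whether distinct $\rmPi_i$ can share a point, which in any case has probability zero when $\nu$ is diffuse (as it is in the beta process application, $\mu$ being diffuse). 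One first notes that $\nu=\sum_i\nu_i$ is a measure (a countable sum of measures) and, under the standing $\sigma$-finiteness assumption, that $N$ is then a genuine locally finite point process on $\rm{\Theta}\times[0,1]$.

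For a fixed measurable $A$, $N(A)=\sum_{i=1}^\infty N_i(A)$ is a sum of independent $\mathrm{Poisson}(\nu_i(A))$ variables, so for $s\in[0,1]$ one computes
$$\E\, s^{N(A)}=\prod_{i=1}^\infty\E\, s^{N_i(A)}=\prod_{i=1}^\infty e^{\nu_i(A)(s-1)}=e^{(s-1)\nu(A)},$$
the interchange of $\E$ with the infinite product being justified since the partial products lie in $[0,1]$. This is exactly the generating function of $\mathrm{Poisson}(\nu(A))$, with the usual convention that $N(A)=\infty$ almost surely when $\nu(A)=\infty$; in particular $\nu(A)<\infty$ forces $N(A)<\infty$ a.s., which delivers local finiteness along a $\sigma$-finite exhaustion.

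For pairwise disjoint $A_1,\dots,A_k$ and $s_1,\dots,s_k\in[0,1]$, the same bookkeeping applied jointly gives
$$\E\prod_{j=1}^k s_j^{N(A_j)}=\E\prod_{i=1}^\infty\prod_{j=1}^k s_j^{N_i(A_j)}=\prod_{i=1}^\infty\E\prod_{j=1}^k s_j^{N_i(A_j)}=\prod_{i=1}^\infty\prod_{j=1}^k e^{\nu_i(A_j)(s_j-1)}=\prod_{j=1}^k e^{(s_j-1)\nu(A_j)},$$
where the second equality uses independence of the $\rmPi_i$, the third uses that each $\rmPi_i$ is Poisson (so the $N_i(A_j)$ are independent across $j$ with the stated marginals), and the reorderings of the nonnegative double product are justified by Tonelli. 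The right-hand side is a product over $j$ of the single-set generating functions from the previous step, which shows simultaneously that the $N(A_j)$ are independent and that each is $\mathrm{Poisson}(\nu(A_j))$. Since a point process is determined in law by these finite-dimensional count distributions, $\rmPi$ is the Poisson process with mean measure $\nu$.

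The only real work here is the measure-theoretic hygiene: legitimizing the exchanges of expectation with infinite products and the Tonelli reorderings (all quantities lie in $[0,1]$, or one may take logarithms to obtain monotone partial sums in $[-\infty,0]$), and invoking $\sigma$-finiteness of $\nu$ so that $\rmPi$ is a bona fide point process rather than a dense set. The ``hard part,'' such as it is, is exactly this; the probabilistic content reduces to the elementary fact that independent Poisson variables add to a Poisson variable, promoted to the countable setting via generating functions.
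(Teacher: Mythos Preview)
Your argument is correct and is exactly the classical proof of the superposition theorem: reduce to counting measures, verify the Poisson marginals and the independence over disjoint sets via joint probability generating functions, and handle the limit interchanges by boundedness in $[0,1]$ (or monotonicity after taking logs). There is nothing to fault in the probabilistic content or in the measure-theoretic justifications you flag.

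As for comparison with the paper: there is nothing to compare against. The paper does not prove Lemma~\ref{lem.superposition}; it is stated as a quotation from \cite{Kingman:1993}, Chapter~2, Section~2, and is invoked as a black box in the second proof of Theorem~\ref{thm.stickBP}. Your write-up is essentially the argument one finds in Kingman's book, so in that sense you have reproduced the intended proof rather than offered an alternative.
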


In the next section we will present a finite approximation of the beta process and prove that it is asymptotically distributed as a beta process. We will then use this finite approximation in our derivation of the stick-breaking construction presented in Section \ref{sec.stickbreaking}.

\section{A finite approximation of the beta process}\label{sec.finite}

In this section and we consider beta processes with a constant concentration function, $\alpha(\theta) = \alpha$. In this case, the beta process with constant concentration can be approximated using a finite collection of beta random variables and atoms drawn from a base distribution as follows:

\begin{definition}[Beta prior sieves]\label{def.finite_approx}
Let $\alpha(\theta) = \alpha$ and $\mu$ be a diffuse and finite measure on $(\rm{\Theta},\mathcal{A})$. For an integer $K > \mu(\rm{\Theta})$, we define a finite approximation to the beta process as $H_K = \sum_{k=1}^K\pi_k\delta_{\theta_k}$, where $\pi_k \sim Beta(\alpha\mu(\rm{\Theta})/K,\alpha(1-\mu(\rm{\Theta})/K))$ and $\theta_k \sim \mu/\mu(\rm{\Theta})$, with all random variables drawn independently.
\end{definition}

This is similar in spirit to approximations of the Dirichlet process using finite Dirichlet distributions and i.i.d.\ atoms. \cite{Ishwaran:2002a} proved that such an approximation converges in the limit to a Dirichlet process under certain parameterizations. We present a proof of the following corresponding result.

\begin{theorem}[Convergence of finite approximation]\label{thm.finite_convergence}
For the finite approximation of the beta process given in Definition \ref{def.finite_approx}, $\lim_{K\rightarrow\infty} H_K$ converges in distribution to $H \sim \BP(\alpha,\mu)$. 
\end{theorem}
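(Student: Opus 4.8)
The plan is to show convergence in distribution of the completely random measure $H_K$ to $H$ by showing convergence of Laplace functionals, evaluated on an arbitrary compact set $A \in \mathcal{A}$ (and more generally on finite linear combinations $\sum_m t_m \mathbb{I}_{A_m}$ of indicators of disjoint sets, which is what is needed to pin down the law of a random measure). Since the atoms $\theta_k$ in $H_K$ are drawn i.i.d. from $\mu/\mu(\rm{\Theta})$ and the weights $\pi_k$ are drawn i.i.d. and independently of the atoms, for fixed $t<0$ we have
\begin{equation}\label{eqn.HK_laplace}
 \E\,\e^{tH_K(A)} = \(\E\,\e^{t\pi_1 \mathbb{I}(\theta_1\in A)}\)^K = \(1 - \tfrac{\mu(A)}{\mu(\rm{\Theta})}\(1 - \E\,\e^{t\pi_1}\)\)^K,
\end{equation}
where $\pi_1 \sim \mathrm{Beta}(\alpha\mu(\rm{\Theta})/K,\,\alpha(1-\mu(\rm{\Theta})/K))$. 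First I would take logarithms and use $\log(1-x) = -x + O(x^2)$, so that the exponent behaves like $-K\cdot\frac{\mu(A)}{\mu(\rm{\Theta})}(1-\E\,\e^{t\pi_1})$ up to a vanishing correction (the correction is $O(K \cdot (\text{something}/K)^2)$, and I need to check that $1-\E\,\e^{t\pi_1}$ is itself $O(1/K)$, so that the squared term is $O(1/K)$). Thus the whole problem reduces to computing the asymptotics of $K(1-\E\,\e^{t\pi_1})$ as $K\to\infty$.

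The key computation is therefore the following claim: with $\pi \sim \mathrm{Beta}(\alpha c/K,\,\alpha(1-c/K))$ where $c = \mu(\rm{\Theta})$,
\begin{equation}\label{eqn.moment_limit}
 \lim_{K\to\infty} K\(1 - \E\,\e^{t\pi}\) = \frac{K}{c}\int_0^1 (1-\e^{t\pi})\,\alpha c\, \pi^{-1}(1-\pi)^{\alpha-1}\,d\pi \quad\text{(as a limit of its own, finite)},
\end{equation}
i.e. that this quantity converges to $\int_0^1 (1-\e^{t\pi})\,\alpha\,\pi^{-1}(1-\pi)^{\alpha-1}\,d\pi$. To see this, write out the Beta density with normalizing constant $\rmGamma(\alpha)/(\rmGamma(\alpha c/K)\rmGamma(\alpha(1-c/K)))$, and use $\rmGamma(\alpha c/K) \sim K/(\alpha c)$ as $K\to\infty$ (since $z\rmGamma(z)\to 1$ as $z\to 0$) together with $\rmGamma(\alpha(1-c/K))\to\rmGamma(\alpha)$. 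Hence the normalizing constant times $\pi^{\alpha c/K - 1}$ is asymptotically $(\alpha c/K)\cdot \pi^{-1}(1-\pi)^{\alpha(1-c/K)-1}$ up to the factor $\pi^{\alpha c/K}\to 1$, so that
\[
 K\(1 - \E\,\e^{t\pi}\) = K\int_0^1 (1-\e^{t\pi})\,\frac{\rmGamma(\alpha)\,\pi^{\alpha c/K-1}(1-\pi)^{\alpha(1-c/K)-1}}{\rmGamma(\alpha c/K)\rmGamma(\alpha(1-c/K))}\,d\pi \longrightarrow \alpha c\int_0^1 (1-\e^{t\pi})\,\pi^{-1}(1-\pi)^{\alpha-1}\,d\pi.
\]
Combining with \eqref{eqn.HK_laplace} and \eqref{eqn.BP_laplace}–\eqref{eqn.BP_mean_measure} (noting $\mu(A)/\mu(\rm{\Theta}) \cdot \alpha c = \alpha\mu(A)$ and $\nu(d\theta,d\pi)$ restricted to $A$ integrates $1-\e^{t\pi}$ to exactly $\alpha\mu(A)\int_0^1(1-\e^{t\pi})\pi^{-1}(1-\pi)^{\alpha-1}d\pi$) gives $\E\,\e^{tH_K(A)}\to \E\,\e^{tH(A)}$. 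The extension to $\sum_m t_m\mathbb{I}_{A_m}$ is identical, using that distinct $A_m$ are hit by disjoint subsets of the $K$ atoms.

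The main obstacle is making the limit in \eqref{eqn.moment_limit} rigorous: the integrand $\pi^{\alpha c/K - 1}$ has an integrable-but-blowing-up singularity at $\pi=0$ whose mass is exactly what survives the $K\to\infty$ scaling, so one cannot simply pass the limit under the integral naively against the pointwise limit $\pi^{-1}(1-\pi)^{\alpha-1}$, which is \emph{not} integrable. The clean way is to keep the factor of $K$ (equivalently $\alpha c/\rmGamma(\alpha c/K) \approx$ the right constant) paired with the density the whole time, and use that $(1-\e^{t\pi})/\pi$ is bounded and continuous on $[0,1]$ for $t<0$, so that $K\int_0^1 (1-\e^{t\pi}) f_K(\pi)\,d\pi$ with $f_K$ the Beta density equals $\E[(1-\e^{t\pi})/\pi]\cdot K\E[\pi]$-type expressions — or, most transparently, substitute the explicit constants and apply dominated convergence to $\int_0^1 \frac{1-\e^{t\pi}}{\pi}\cdot K f_K(\pi)\,\pi\,d\pi$ after checking $Kf_K(\pi)\pi = K\cdot\frac{\rmGamma(\alpha)}{\rmGamma(\alpha c/K)\rmGamma(\alpha(1-c/K))}\pi^{\alpha c/K}(1-\pi)^{\alpha(1-c/K)-1}$ is dominated uniformly in $K$ by an integrable function on $[0,1]$ (which it is, since $\pi^{\alpha c/K}\le 1$, the Gamma factors converge, and $(1-\pi)^{\alpha(1-c/K)-1}$ is dominated by $(1-\pi)^{\alpha/2 - 1}$ for $K$ large). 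A secondary, minor point is controlling the $O(x^2)$ error from the logarithm in \eqref{eqn.HK_laplace} uniformly, but since $1-\E\,\e^{t\pi} = \Theta(1/K)$ this contributes $O(1/K)\to 0$ and is routine. Finally, I would remark that convergence of the Laplace functional on all such step functions implies convergence of the finite-dimensional distributions $(H_K(A_1),\dots,H_K(A_m))$ and hence convergence in distribution of $H_K$ to $H$ as random measures.
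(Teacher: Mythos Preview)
Your proposal is correct and follows the same overall strategy as the paper: compute the Laplace functional $\E\,\e^{tH_K(A)}$, factor it as a $K$th power by independence, show the base is $1 + c/K + o(1/K)$ for the right constant $c$, and invoke the exponential limit. The difference lies in how you extract the $1/K$ asymptotic of $1-\E\,\e^{t\pi}$ for $\pi\sim\mathrm{Beta}(\alpha\mu(\rm{\Theta})/K,\alpha(1-\mu(\rm{\Theta})/K))$. The paper expands $\E\,\e^{t\pi}$ as a moment series $1+\sum_{s\ge 1}\frac{t^s}{s!}\prod_{r=0}^{s-1}\frac{\alpha\mu(\rm{\Theta})/K+r}{\alpha+r}$, pulls out the $r=0$ factor to expose the $1/K$, recognizes the remaining product as $\rmGamma(\alpha)\rmGamma(s)/\rmGamma(\alpha+s)$, and converts this Beta function back into the integral $\int_0^1\pi^{s-1}(1-\pi)^{\alpha-1}d\pi$ before resumming to $\e^{t\pi}-1$. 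You instead stay in integral form throughout, use $\rmGamma(z)\sim 1/z$ to handle the normalizing constant, and justify the limit by dominated convergence after absorbing the $\pi^{-1}$ singularity into the bounded factor $(1-\e^{t\pi})/\pi$. Your route is arguably more direct and makes the role of the L\'evy density $\alpha\pi^{-1}(1-\pi)^{\alpha-1}$ appear immediately, at the cost of needing an explicit dominating function near $\pi=1$ (your $(1-\pi)^{\alpha/2-1}$ works); the paper's series manipulation avoids any integrability check but requires the detour through moments and back. Both are clean; neither has a gap.
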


\begin{proof}
 We prove that the Laplace functional of $H_K$ given in Definition \ref{def.finite_approx} converges to the Laplace functional of a beta process given in Equations (\ref{eqn.BP_laplace}) and (\ref{eqn.BP_mean_measure}) with $\alpha$ and $\mu$ satisfying the conditions of Definition \ref{def.finite_approx}.

Let $H:=\lim_{K\rightarrow\infty}H_K$ and let $t < 0$ and $A\in\mathcal{A}$. By the dominated convergence theorem and the independence of all random variables,
\begin{eqnarray}
 \E\,\e^{t H(A)} &=& \lim_{K\rightarrow\infty}\E\,\e^{t H_K(A)}\nn\\
&=&\lim_{K\rightarrow\infty}\left[\E\e^{t\pi\I(\theta\in A)}\right]^K,\label{eqn.limit_proof1}
\end{eqnarray}
where $H_K(A) = \sum_{k=1}^K \pi_k\I(\theta_k\in A)$. Using the tower property of conditional expectation and the law of total probability, the expectation in (\ref{eqn.limit_proof1}) is equal to
\begin{eqnarray}
 \E\,\e^{t\pi\I(\theta\in A)} &=& \E\[\E\[\e^{t\pi\I(\theta\in A)}\I(\theta\in A) + \e^{t\pi\I(\theta\in A)}\I(\theta \not\in A)|\theta\]\]\nn\\
 &=&\mathbb{P}(\theta \in A)\E\,\e^{t\pi} + \mathbb{P}(\theta\not\in A)\nn\\
&=& \frac{\mu(A)}{\mu(\rm{\Theta})}\E\,\e^{t\pi} + 1 - \frac{\mu(A)}{\mu(\rm{\Theta})}.\label{eqn.limit_proof2}
\end{eqnarray}
Again we use the fact that $\theta \sim \mu/\mu(\rm{\Theta})$ and is independent of $\pi$. The Laplace transform of $\pi \sim \rm{Beta}(\alpha\mu(\rm{\Theta})/K,\alpha(1-\mu(\rm{\Theta})/K))$ is
\begin{equation}\label{eqn.limit_proof3}
 \E\,\e^{t\pi} = 1 + \sum_{s=1}^{\infty}\frac{t^s}{s!}\prod_{r=0}^{s-1}\frac{\frac{\alpha\mu(\rm{\Theta})}{K} + r}{\alpha + r}.
\end{equation}
Using this in Equation (\ref{eqn.limit_proof2}) and manipulating the result gives
\begin{eqnarray}
  \E\,\e^{t\pi\I(\theta\in A)} & = & 1 + \frac{\mu(A)}{\mu(\rm{\Theta})}\sum_{s=1}^{\infty}\frac{t^s}{s!}\prod_{r=0}^{s-1}\frac{\frac{\alpha\mu(\rm{\Theta})}{K} + r}{\alpha + r}\\
&=& 1 + \frac{\mu(A)}{K}\sum_{s=1}^{\infty}\frac{t^s}{s!}\prod_{r=1}^{s-1}\frac{r}{\alpha+r} + O(\frac{1}{K^2})\label{eqn.limit_proof4}\\
&=& 1 + \frac{\mu(A)}{K}\sum_{s=1}^{\infty}\frac{t^s}{s!}\frac{\alpha\rmGamma{(\alpha)}\rmGamma{(s)}}{\rmGamma{(\alpha+s)}} + O(\frac{1}{K^2})\\
&=& 1 + \frac{\mu(A)}{K}\sum_{s=1}^{\infty}\frac{t^s}{s!}\int_0^1\alpha\pi^{s-1}(1-\pi)^{\alpha-1}d\pi + O(\frac{1}{K^2})\\
&=& 1 + \frac{\mu(A)}{K}\int_0^1\left(\sum_{s=1}^{\infty}\frac{(t\pi)^s}{s!}\right)\alpha\pi^{-1}(1-\pi)^{\alpha-1}d\pi + O(\frac{1}{K^2})\\
&=& 1 + \frac{\mu(A)}{K}\int_0^1\(\e^{t\pi}-1\)\alpha\pi^{-1}(1-\pi)^{\alpha-1}d\pi + O(\frac{1}{K^2})\label{eqn.limit_proof5}.
\end{eqnarray}
In Equation (\ref{eqn.limit_proof4}) we use the convention that the product equals one when $s=1$. Taking the limit of Equation (\ref{eqn.limit_proof1}) using the value in Equation (\ref{eqn.limit_proof5}), the $O(K^{-2})$ term disappears and we have the familiar exponential limit,
\begin{equation}
 \lim_{K\rightarrow\infty}\left[\E\e^{t\pi\I(\theta\in A)}\right]^K = \exp\left\lbrace \mu(A)\int_0^1\(\e^{t\pi}-1\)\alpha\pi^{-1}(1-\pi)^{\alpha-1}d\pi\right\rbrace ,
\end{equation}
which we recognize as the Laplace transform of a beta process $H(A)$ drawn from a beta process. Since this is true for all $A \in \mathcal{A}$, we get the corresponding Laplace functional of a beta process.
\end{proof}

\section{A stick-breaking construction of the beta process}\label{sec.stickbreaking}
Because the beta process is an infinite jump process, efficient methods are necessary for finding these jump locations. The stick-breaking construction of the beta process is one such method that stands in similar relation to the beta process as the \cite{Sethuraman:1994} construction does to the Dirichlet process. Indeed, because we directly use results from \cite{Sethuraman:1994}, the form of the construction is very similar to Equation (\ref{eqn.DP}).

\begin{theorem}[Stick-breaking construction of the beta process]\label{thm.stickBP}
Let $\mu$ be a diffuse and finite measure on $(\rm{\Theta},\mathcal{A})$ and $\alpha(\theta)$ be a strictly positive and finite function on $\rm{\Theta}$. The following is a constructive definition of the beta process $H\sim\BP(\alpha,\mu)$,
\begin{equation}\label{eqn.Horiginal}
 H = \sum_{i=1}^{\infty} \sum_{j=1}^{C_i} V_{ij}^{(i)}\prod_{l=1}^{i-1}(1-V_{ij}^{(l)})\delta_{\theta_{ij}},
\end{equation}
\begin{equation}\nonumber
C_i \iid \mathrm{Pois}(\mu(\rm{\Theta})),\quad V_{ij}^{(l)}\,|\,\theta_{ij} \ind \mathrm{Beta}(1,\alpha(\theta_{ij})),\quad \theta_{ij} \iid \mu/\mu(\rm{\Theta}).
\end{equation}
\end{theorem}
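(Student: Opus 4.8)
The plan is to give the Poisson-process proof: I realize the collection of atom--weight pairs appearing in (\ref{eqn.Horiginal}) as a marked Poisson process on $\rm{\Theta}\times[0,1]$, identify its mean measure with the beta-process L\'evy measure, and then invoke Definition \ref{def.bp1}.

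Fix a round $i$. The locations $\{\theta_{ij}:1\le j\le C_i\}$ form a $\mathrm{Pois}(\mu(\rm{\Theta}))$ number of i.i.d.\ draws from $\mu/\mu(\rm{\Theta})$, hence a Poisson process on $\rm{\Theta}$ with mean measure $\mu$. To each $\theta_{ij}$ attach the weight $w_{ij}=V_{ij}^{(i)}\prod_{l=1}^{i-1}(1-V_{ij}^{(l)})$; conditionally on $\theta_{ij}$ this is a single draw from the law $\lambda_i(\theta,\cdot)$ of $T_i^{(\theta)}:=V^{(i)}\prod_{l=1}^{i-1}(1-V^{(l)})$ with $V^{(l)}\iid\mathrm{Beta}(1,\alpha(\theta))$, which defines a transition kernel from $(\rm{\Theta},\mathcal{A})$ into $([0,1],\mathcal{B})$. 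By Lemma \ref{lem.marked}, $\rmPi_i:=\{(\theta_{ij},w_{ij}):j\le C_i\}$ is a Poisson process on $\rm{\Theta}\times[0,1]$ with mean measure $\mu(d\theta)\lambda_i(\theta,d\pi)$. Since the $C_i$, the $\theta_{ij}$ and the $V_{ij}^{(l)}$ are mutually independent, the processes $\rmPi_1,\rmPi_2,\dots$ are independent, so by Lemma \ref{lem.superposition} the superposition $\rmPi:=\bigcup_{i\ge1}\rmPi_i=\{(\theta_{ij},w_{ij})\}_{i,j}$ is a Poisson process with mean measure $\mu(d\theta)\sum_{i\ge1}\lambda_i(\theta,d\pi)$. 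Writing $N$ for its counting measure, the process in (\ref{eqn.Horiginal}) is $H=\sum_{i,j}w_{ij}\delta_{\theta_{ij}}$, i.e.\ $H(A)=\int_{A\times[0,1]}\pi\,N(d\theta,d\pi)$, so by Definition \ref{def.bp1} it only remains to show, for each fixed $\theta$ (write $\alpha=\alpha(\theta)$),
$$\sum_{i=1}^{\infty}\lambda_i(\theta,d\pi)=\nu_\alpha(d\pi):=\alpha\,\pi^{-1}(1-\pi)^{\alpha-1}\,d\pi\quad\text{on } (0,1].$$

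To prove this identity I would match moments. For $V\sim\mathrm{Beta}(1,\alpha)$ we have $1-V\sim\mathrm{Beta}(\alpha,1)$, hence $\E[(1-V)^s]=\alpha/(\alpha+s)$ and $\E[V^s]=\alpha B(s+1,\alpha)=\Gamma(1+s)\Gamma(1+\alpha)/\Gamma(1+s+\alpha)$; by independence $\E[(T_i^{(\theta)})^{s}]=\frac{\Gamma(1+s)\Gamma(1+\alpha)}{\Gamma(1+s+\alpha)}\big(\frac{\alpha}{\alpha+s}\big)^{i-1}$. Summing the geometric series over $i$ and simplifying via $\Gamma(1+x)=x\Gamma(x)$ gives $\sum_{i\ge1}\E[(T_i^{(\theta)})^{s+1}]=\frac{\Gamma(1+s)\Gamma(1+\alpha)}{\Gamma(1+s+\alpha)}=\E[W^s]$ for $W\sim\mathrm{Beta}(1,\alpha)$. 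By Tonelli's theorem, $\rho(d\pi):=\sum_{i\ge1}\pi\,\lambda_i(\theta,d\pi)$ is a finite measure on $[0,1]$ of total mass $\sum_i\E[T_i^{(\theta)}]=1$ (telescoping $T_i^{(\theta)}=R_{i-1}-R_i$ with $R_i:=\prod_{l\le i}(1-V^{(l)})\to 0$ a.s.) whose moments agree with those of $\mathrm{Beta}(1,\alpha)$; since the moment problem on the compact interval $[0,1]$ is determinate, $\rho(d\pi)=\alpha(1-\pi)^{\alpha-1}\,d\pi=\pi\,\nu_\alpha(d\pi)$. Because $T_i^{(\theta)}\in(0,1)$ a.s.\ the measure $\rho$ puts no mass at $0$, so dividing by $\pi$ on $(0,1]$ gives $\sum_i\lambda_i(\theta,d\pi)=\nu_\alpha(d\pi)$, as required. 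Finiteness of $H$ comes for free: $\E[H(A)]=\int_A\big(\sum_i\E[T_i^{(\theta)}]\big)\mu(d\theta)=\mu(A)<\infty$.

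The only step with real content is the L\'evy-measure identity $\sum_i\lambda_i(\theta,\cdot)=\nu_\alpha$; everything else is bookkeeping with Lemmas \ref{lem.marked}--\ref{lem.superposition} and Definition \ref{def.bp1}. An alternative route, closer to Section \ref{sec.finite}, would be to unfold the sieve $H_K$ of Definition \ref{def.finite_approx}: decompose each $\mathrm{Beta}(\alpha\mu(\rm{\Theta})/K,\alpha(1-\mu(\rm{\Theta})/K))$ weight into stick-breaking-type factors, using that as $K\to\infty$ such a weight behaves, up to $O(K^{-1})$, like a $\mathrm{Beta}(1,\alpha)$ contribution retained with probability $O(K^{-1})$, and then pass to the limit with Theorem \ref{thm.finite_convergence}. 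I would expect the combinatorial accounting in that second route to be the delicate part, which is why I would carry out the Poisson-process argument above.
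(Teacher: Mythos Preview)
Your Poisson-process framework---realize each round as a marked Poisson process (Lemma~\ref{lem.marked}), superpose (Lemma~\ref{lem.superposition}), and reduce to the L\'evy-measure identity $\sum_i\lambda_i(\theta,\cdot)=\nu_\alpha$---is exactly the skeleton of the paper's second proof (Section~\ref{sec.BPproof}). Where you diverge is in establishing that identity. The paper first rewrites $\prod_{l<i}(1-V^{(l)})$ as $e^{-T}$ with $T\sim\mathrm{Gamma}(i-1,\alpha)$ (Lemma~\ref{lem.stick_exponential}), derives the density $f_i$ of $\pi_{ij}=V_{ij}e^{-T_{ij}}$ via the product-distribution formula, and then sums $\sum_{i\ge2}f_i$ by swapping sum and integral so that an exponential power series collapses; the last integral is evaluated in closed form. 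You instead multiply through by $\pi$, compute the moments $\int\pi^s\cdot\pi\,\lambda_i(d\pi)=\E[(T_i)^{s+1}]$ directly from the Beta moments, sum the geometric series in $i$, recognize the result as the moments of $\mathrm{Beta}(1,\alpha)$, and invoke determinacy of the Hausdorff moment problem on $[0,1]$.

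Your route is correct and arguably cleaner: it bypasses the change of variables, the product-distribution integral, and the explicit summation of densities, trading them for a short moment computation and a classical uniqueness fact. What the paper's route buys is explicit formulas for each $f_i$, which feed directly into the truncation analysis of Section~\ref{sec.truncations} (one needs $\nu_R^+=\sum_{i>R}\mu\times\lambda_i$ in a form one can integrate against). Your remark about the sieve-based alternative is also accurate: that is precisely the paper's first proof (Section~\ref{sec.stickproof1}), and the ``combinatorial accounting'' you anticipate is handled there by bounding the probability that two columns of the Bernoulli array share an index.
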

This construction sequentially incorporates into $H$ a Poisson-distributed number of atoms drawn i.i.d.\ from $\mu/\mu(\rm{\Theta})$, with each group in this sequence indexed by $i$. The atoms receive weights in $(0,1]$ drawn independently as follows: Using an atom-specific stick-breaking construction, an atom in group $i$ throws away the first $i-1$ breaks of its stick and keeps the $i$th break as its weight. We present a straightforward extension of Theorem \ref{thm.stickBP} to $\sigma$-finite $\mu$.
\begin{corollary}[A $\sigma$-finite extension]\label{cor.BPstick}
 Let $(E_k)$ be a partition of $\rm{\Theta}$, where each $E_k$ is compact and therefore $\mu(E_k) < \infty$. The construction of Theorem \ref{thm.stickBP} can be extended to this case by constructing independent beta processes over each $E_k$ and obtaining the full beta process by summing the beta process over each set.
\end{corollary}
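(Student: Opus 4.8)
The plan is to reduce the $\sigma$-finite case to the already-proved finite case of Theorem \ref{thm.stickBP} by cutting $\rm{\Theta}$ along the partition $(E_k)$, running the stick-breaking recursion on each block independently, and gluing the pieces back together via the superposition property (Lemma \ref{lem.superposition}). First I would set $\mu_k := \mu|_{E_k}$ and $\alpha_k := \alpha|_{E_k}$; since $E_k$ is compact, $\mu_k$ is a diffuse measure with $\mu(E_k) < \infty$, so Theorem \ref{thm.stickBP} applies verbatim and produces, from an independent copy of the recursion using $C_i^{(k)} \iid \mathrm{Pois}(\mu(E_k))$, breaks $V_{ij}^{(l)}\,|\,\theta_{ij} \ind \mathrm{Beta}(1,\alpha(\theta_{ij}))$, and atoms $\theta_{ij} \iid \mu_k/\mu(E_k)$, a random measure $H_k \sim \BP(\alpha_k,\mu_k)$. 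By Definition \ref{def.bp1}, $H_k$ is driven by a Poisson random measure $N_k$ on $\rm{\Theta}\times[0,1]$ with mean measure $\nu_k(d\theta,d\pi) = \alpha(\theta)\pi^{-1}(1-\pi)^{\alpha(\theta)-1}\,\I(\theta\in E_k)\,d\pi\,\mu(d\theta)$, supported on $E_k\times[0,1]$; taking the $N_k$ mutually independent is the content of "constructing independent beta processes over each $E_k$."

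Next I would apply Lemma \ref{lem.superposition} to the family $(N_k)$: the superposition $N = \bigcup_k N_k$ is a Poisson random measure on $\rm{\Theta}\times[0,1]$ with mean measure $\nu = \sum_k \nu_k$. Because $(E_k)$ is a partition, $\sum_k \I(\theta\in E_k) = 1$ for every $\theta$, so $\nu(d\theta,d\pi) = \alpha(\theta)\pi^{-1}(1-\pi)^{\alpha(\theta)-1}\,d\pi\,\mu(d\theta)$, which is precisely the mean measure of Definition \ref{def.bp1}. Hence $H := \sum_k H_k$ equals $\int_{\,\cdot\,\times[0,1]} N(d\theta,d\pi)\,\pi$ and is therefore a $\BP(\alpha,\mu)$, while simultaneously each summand $H_k$ has been produced by the stick-breaking recursion of Theorem \ref{thm.stickBP}. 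This is exactly the asserted extension.

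The only points needing care—and the closest thing to an obstacle—are the measure-theoretic checks that make $H = \sum_k H_k$ a bona fide (a.s. locally finite) random measure despite the countable sum. For any compact $A$, using $\int_0^1 \pi\,\lambda(\theta,d\pi) = \int_0^1 \alpha(\theta)(1-\pi)^{\alpha(\theta)-1}d\pi = 1$ and the $\sigma$-finiteness of $\mu$, one has $\E\,H(A) = \sum_k \E\,H_k(A\cap E_k) = \sum_k \mu(A\cap E_k) = \mu(A) < \infty$, so $H(A) < \infty$ almost surely; likewise, inside any set of finite $\mu$-measure only finitely many blocks $E_k$ contribute a jump exceeding a given $\epsilon > 0$, since the relevant counts are independent Poissons whose means sum to a finite quantity. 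Finally I would remark that the particular choice of partition is immaterial: the law of $H$ depends only on the mean measure $\nu$, which by the computation above is the same for every such $(E_k)$, and $\nu$ characterizes the beta process through the Laplace functional (\ref{eqn.BP_laplace}).
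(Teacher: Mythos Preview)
Your argument is correct and is exactly the natural way to justify the corollary using the machinery the paper has already set up: restrict to each block, invoke Theorem~\ref{thm.stickBP} on the finite pieces, and then glue via Lemma~\ref{lem.superposition} so that the mean measures add back to $\nu$. The paper itself states this corollary without proof, treating it as an immediate consequence of the finite-$\mu$ construction together with complete randomness; your write-up simply makes explicit the superposition step and the a.s.\ local finiteness check (via $\E\,H(A)=\mu(A)<\infty$) that the paper leaves to the reader.
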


For the remainder of the paper we will assume $\mu(\rm{\Theta}) = \gamma < \infty$. Several inference algorithms have been presented for this construction \citep{Paisley:2010,Paisley:2011,Paisley:2012} using both MCMC and variational methods.  Recently, \cite{Broderick:2012b} extended Theorem \ref{thm.stickBP} to beta processes with power-law behavior.
\begin{theorem}[A power-law extension \citep{Broderick:2012b}]\label{prop.powerlaw}
Working within the setup of Theorem \ref{thm.stickBP} with $\alpha(\theta)$ equal to the constant $\alpha$, let $\beta$ be a discount parameter in $(0,1)$. Construct $H$ similar to Theorem \ref{thm.stickBP}, with the exception that $V_{ij}^{(\ell)} \sim Beta(1-\beta,\alpha + i\beta)$. Then $H$ exhibits power law behavior of Type I and II, but not of Type III.
\end{theorem}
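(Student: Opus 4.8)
Since this is established in \citep{Broderick:2012b}, I will outline the strategy one would follow. The plan is to first identify $H$ with a stable-beta (three-parameter) process by computing the L\'{e}vy measure of its underlying Poisson random measure, and then to read the three power-law properties off that measure. For the first step one argues as in the Poisson-process proof of Theorem~\ref{thm.stickBP}: the $\mathrm{Pois}(\gamma)$ atoms of round $i$, drawn i.i.d.\ from $\mu/\gamma$, form (by thinning a Poisson process on $\rm{\Theta}$ with mean measure $\mu$) a Poisson process with mean measure $\mu$; marking each of them with the weight it keeps---the $i$-th of its own stick breaks---turns round $i$ into a marked Poisson process on $\rm{\Theta}\times(0,1]$ with mean measure $\mu(d\theta)\,\rho_i(d\pi)$, where $\rho_i$ is the law of that weight (Lemma~\ref{lem.marked}). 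The rounds are independent, so by the superposition property (Lemma~\ref{lem.superposition}) the atoms of $H$ form a Poisson process with mean measure $\mu(d\theta)\otimes\rho(d\pi)$, where $\rho:=\sum_{i\ge 1}\rho_i$. The key computation is to evaluate $\rho$: writing $\int_0^1\pi^s\,\rho_i(d\pi)=\E[(V^{(i)})^s]\prod_{l=1}^{i-1}\E[(1-V^{(l)})^s]$ in terms of the moments of the $\mathrm{Beta}$ variables of the construction and summing the resulting ratio of Gamma functions over $i$, the sum telescopes and (the moment problem on $(0,1]$ being determinate) one recognizes $\rho(d\pi)=\frac{\rmGamma(1+\alpha)}{\rmGamma(1-\beta)\rmGamma(\alpha+\beta)}\,\pi^{-1-\beta}(1-\pi)^{\alpha+\beta-1}\,d\pi$, the stable-beta L\'{e}vy measure. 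Because $\beta\in(0,1)$ we have $\int_0^1(\pi\wedge 1)\,\rho(d\pi)<\infty$ (indeed $\int_0^1\pi\,\rho(d\pi)=1$), so $H$ is almost surely finite and the identification is legitimate; the precise way the $\mathrm{Beta}$ parameters depend on the round and break indices is exactly what makes this collapse go through.

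With $\rho$ in hand the power laws reduce to asymptotics of explicit Beta integrals. Couple $H$ with a Bernoulli process, so that given $H$ each atom of weight $\pi$ belongs to each of $N$ data points independently with probability $\pi$; by Campbell's theorem the feature counts satisfy $\E[K_N]=\int_0^1(1-(1-\pi)^N)\,\rho(d\pi)$ and $\E[K_{N,j}]=\binom{N}{j}\int_0^1\pi^j(1-\pi)^{N-j}\,\rho(d\pi)$, the latter an explicit ratio of Gamma functions. For Type~I one shows $\E[K_N]\sim C_\beta N^\beta$---split the integral at $\pi$ of order $1/N$, or read it off the Gamma form---and then upgrades this to the almost sure statement $K_N/N^\beta\to(\text{positive limit})$ using that, conditionally on $H$, $K_N$ is a sum of independent indicators over the Poisson atoms (so its variance is of the same order as its mean), together with monotonicity in $N$ and Borel--Cantelli. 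For Type~II one shows $\E[K_{N,j}]/\E[K_N]\to p_j=\beta\,\rmGamma(j-\beta)/(\rmGamma(1-\beta)\,j!)$ as $N\to\infty$, and that $p_j\sim C\,j^{-(1+\beta)}$ as $j\to\infty$, which is precisely the power-law-in-popularity of Type~II.

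For the negative assertion one checks that the Type~III property (as formalized in \citep{Broderick:2012b}) fails for this $\rho$: the construction stays in the ``proper'' regime $\int_0^1\pi\,\rho(d\pi)<\infty$, so an individual data point has only finitely many features and the limiting object a Type~III power law would require degenerates. I expect the main obstacle to be the Gamma-ratio summation in the first step: collapsing $\sum_i\rho_i$ to the clean density $\pi^{-1-\beta}(1-\pi)^{\alpha+\beta-1}$ needs the $\mathrm{Beta}$ parameters to be exactly calibrated, and a mismatch would change the tail of $\rho$---hence the common exponent $\beta$ in all three properties. Once $\rho$ is identified, what remains is routine Stirling/Karamata asymptotics plus a standard concentration upgrade, and checking the failure of Type~III is a minor additional point.
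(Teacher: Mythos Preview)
The paper does not supply a proof of this theorem; it is stated as a citation of \cite{Broderick:2012b} with no accompanying argument. You correctly recognize this at the outset, and your outline is a faithful sketch of the strategy in that reference: superpose the round-wise marked Poisson processes (Lemmas~\ref{lem.marked} and~\ref{lem.superposition}), collapse $\sum_i\rho_i$ to the stable-beta L\'{e}vy density $\rho(d\pi)\propto\pi^{-1-\beta}(1-\pi)^{\alpha+\beta-1}d\pi$ via a telescoping Gamma-ratio computation, and then extract the Type~I and Type~II asymptotics (and the failure of Type~III) from the resulting explicit integrals. There is nothing to compare against in the present paper.

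One caution on the key telescoping step: the collapse to the clean stable-beta density is sensitive to exactly how the $\mathrm{Beta}$ parameters vary across break indices. In \cite{Broderick:2012b} the second parameter depends on the \emph{break} index $\ell$, i.e.\ $V_{ij}^{(\ell)}\sim\mathrm{Beta}(1-\beta,\alpha+\ell\beta)$, so that successive factors $\E[(1-V^{(\ell)})^s]$ telescope against one another; the statement as printed here has $\alpha+i\beta$, which would make all breaks within round $i$ identically distributed and would not telescope in the same way. Your outline implicitly uses the $\ell$-dependent version (you write $\E[(V^{(i)})^s]\prod_{l<i}\E[(1-V^{(l)})^s]$ with distinct superscripts), which is the correct one for the argument to go through---so just be aware of this discrepancy when you write out the details.
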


\subsection{Proof of Theorem \ref{thm.stickBP} via the finite approximation}\label{sec.stickproof1}
We first prove the construction for constant $\alpha(\theta)$ by constructing finite arrays of random variables and considering their limit. To this end, working with the finite approximation in Definition \ref{def.finite_approx}, we represent each beta-distributed random variable by the stick-breaking construction of \cite{Sethuraman:1994}. That is, we apply the constructive definition of a Dirichlet distribution to the beta distribution, which is the two-dimensional special case. Using this construction, we can draw $\pi \sim \rm{Beta}(a,b)$ as follows.

\begin{lemma}[Constructing a beta random variable \citep{Sethuraman:1994}]\label{lem.construct_beta} 
Draw an infinite sequence of random variables $(V_1,V_2,\dots)$ i.i.d.\ ${Beta}(1,a+b)$ and a second sequence $(Y_1,Y_2,\dots)$ i.i.d.\ ${Bern}(\frac{a}{a+b})$. Construct $\pi = \sum_{i=1}^{\infty} V_i\prod_{j=1}^{i-1}(1-V_j)\mathbb{I}(Y_i = 1)$. Then $\pi$ has a ${Beta}(a,b)$ distribution.
\end{lemma}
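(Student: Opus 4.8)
The plan is to recognize the claimed identity as nothing more than the \cite{Sethuraman:1994} construction of a Dirichlet process specialized to a base space consisting of two points. Concretely, let $\mathrm{\Theta}_0 = \{0,1\}$, let $G_0$ be the probability measure on $\mathrm{\Theta}_0$ with $G_0(\{1\}) = a/(a+b)$ and $G_0(\{0\}) = b/(a+b)$, and set the concentration parameter $\alpha_0 = a+b$; a draw $\theta \sim G_0$ is then just $\I(Y=1)$ for $Y \sim \mathrm{Bern}(a/(a+b))$. Under this identification the variables $V_i \iid \mathrm{Beta}(1,a+b)$ in the statement are exactly the stick-breaking variables of a $\mathrm{DP}(\alpha_0 G_0)$, and the $Y_i$ generate i.i.d.\ atoms $\theta_i \sim G_0$ with $\theta_i = 1 \iff Y_i = 1$.

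First I would note that the series defining $\pi$ converges almost surely: the weights $w_i := V_i\prod_{j=1}^{i-1}(1-V_j)$ are nonnegative and sum to one a.s.\ (since $\prod_{j=1}^{n}(1-V_j)\to 0$ a.s.), so $\pi = \sum_i w_i\I(Y_i=1)$ is an a.s.\ convergent subseries and hence a well-defined $[0,1]$-valued random variable. Next, by \cite{Sethuraman:1994}, $G := \sum_{i=1}^\infty w_i \delta_{\theta_i}$ is a $\mathrm{DP}(\alpha_0 G_0)$. By the defining finite-dimensional property of the Dirichlet process, applied to the partition $\{\{1\},\{0\}\}$ of $\mathrm{\Theta}_0$, we get $(G(\{1\}), G(\{0\})) \sim \mathrm{Dirichlet}(\alpha_0 G_0(\{1\}), \alpha_0 G_0(\{0\})) = \mathrm{Dirichlet}(a,b)$. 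Finally I would identify $G(\{1\}) = \sum_i w_i \I(\theta_i = 1) = \sum_i w_i \I(Y_i = 1) = \pi$ (atoms landing repeatedly at the point $1$ simply contribute their weights additively), and therefore $\pi \sim \mathrm{Beta}(a,b)$, the first marginal of a $\mathrm{Dirichlet}(a,b)$.

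There is essentially no hard step here; the only point that deserves a word of care is that Sethuraman's theorem is being invoked with a non-diffuse (in fact two-point) base measure, so the $\theta_i$ are not distinct — but this is permitted, and the coalescing of repeated atoms at $1$ is precisely the mechanism producing the aggregate mass $\pi$. As a self-contained alternative that avoids citing the Dirichlet process, one can peel off the first term to obtain the distributional self-similarity $\pi \stackrel{d}{=} V_1\I(Y_1=1) + (1-V_1)\pi'$, where $\pi'$ is an independent copy of $\pi$ and $(V_1,Y_1)$ is independent of $\pi'$, then verify that $\mathrm{Beta}(a,b)$ solves this fixed-point equation (e.g.\ by matching the moments $\E\pi^n$, which suffices because $\pi$ is supported on the bounded interval $[0,1]$, and the a.s.\ convergence of the partial sums already pins down the law of $\pi$ uniquely). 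I would present the Sethuraman route as the main proof and mention the fixed-point verification only as a remark.
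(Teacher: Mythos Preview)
Your proposal is correct and matches the paper's treatment exactly: the paper states this lemma without proof, attributing it to \cite{Sethuraman:1994} and noting just before the lemma that one is ``apply[ing] the constructive definition of a Dirichlet distribution to the beta distribution, which is the two-dimensional special case.'' Your argument spells out precisely this two-point specialization of Sethuraman's construction, and your aside on the distributional fixed-point equation is in fact the mechanism by which Sethuraman proves his theorem.
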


Practical applications have led to the study of almost sure truncations of stick-breaking processes \citep{Ishwaran:2001}. By extension, an almost sure truncation of a beta random variable is constructed by truncating the sum in Lemma \ref{lem.construct_beta} at level $R$. As $R\rightarrow\infty$ this truncated random variable converges to a beta-distributed random variable. Using an $R$-truncated beta random variable, a corollary of Theorem \ref{thm.finite_convergence} and Lemma \ref{lem.construct_beta} is,

\begin{corollary}[]\label{cor.trunc_finite_approx}
Under the prior assumptions of Definition \ref{def.finite_approx}, draw two $K\times R$ arrays of independent random variables, $V_{ki} \sim Beta(1,\alpha)$ and $Y_{ki} \sim Bern(\mu(\rm{\Theta})/K)$, and draw $\theta_k$ i.i.d.\ $\mu/\mu(\rm{\Theta})$ for $k=1,\dots,K$. Let
\begin{equation}
H_K^{(R)} = \sum_{i=1}^R \sum_{k=1}^K V_{ki}\prod_{i'=1}^{i-1}(1-V_{ki'})\mathbb{I}(Y_{ki}=1)\delta_{\theta_k}.\nn
\end{equation}
Then $H_K^{(R)}$ converges in distribution to $H\sim\BP(\alpha,\mu)$ by letting $K\rightarrow\infty$ and then $R\rightarrow\infty$.
\end{corollary}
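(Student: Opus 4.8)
The plan is to reduce the corollary to its two cited ingredients, Lemma \ref{lem.construct_beta} and Theorem \ref{thm.finite_convergence}, and to dispatch the two limits one at a time. Fix $K$ and write $\pi_k^{(R)} = \sum_{i=1}^R V_{ki}\prod_{i'=1}^{i-1}(1-V_{ki'})\I(Y_{ki}=1)$, so that $H_K^{(R)} = \sum_{k=1}^K \pi_k^{(R)}\delta_{\theta_k}$. Each summand defining $\pi_k^{(R)}$ is nonnegative and $\sum_{i=1}^\infty V_{ki}\prod_{i'<i}(1-V_{ki'}) = 1$ almost surely (the partial sum telescopes to $1-\prod_{i'\le R}(1-V_{ki'})$, and the product tends to $0$ a.s.), so $\pi_k^{(R)}\uparrow \pi_k^{(\infty)} := \sum_{i=1}^\infty V_{ki}\prod_{i'<i}(1-V_{ki'})\I(Y_{ki}=1)$ as $R\to\infty$; since there are only finitely many atoms $\theta_1,\dots,\theta_K$, $H_K^{(R)}(A)\uparrow H_K^{(\infty)}(A)$ a.s.\ for every $A$, where $H_K^{(\infty)}:=\sum_{k=1}^K\pi_k^{(\infty)}\delta_{\theta_k}$. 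By Lemma \ref{lem.construct_beta} with $a+b=\alpha$ and $a/(a+b) = \mu(\mathrm{\Theta})/K$, each $\pi_k^{(\infty)}\sim\mathrm{Beta}(\alpha\mu(\mathrm{\Theta})/K,\alpha(1-\mu(\mathrm{\Theta})/K))$, independently over $k$ and independent of the $\theta_k$'s, so $H_K^{(\infty)}$ has exactly the law of the sieve $H_K$ of Definition \ref{def.finite_approx}. Hence $H_K^{(R)}\xrightarrow{d} H_K$ as $R\to\infty$ for each $K$, and Theorem \ref{thm.finite_convergence} then gives $H_K\xrightarrow{d} H\sim\BP(\alpha,\mu)$ as $K\to\infty$.

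To get the iterated limit in the order literally stated ($K$ first, then $R$), I would argue at the level of Laplace functionals, reusing the computation in the proof of Theorem \ref{thm.finite_convergence}. Fix $R$ and $t<0$. As there, $\E\,\e^{tH_K^{(R)}(A)} = \big[1 + \tfrac{\mu(A)}{\mu(\mathrm{\Theta})}(\E\,\e^{t\pi^{(R)}} - 1)\big]^K$, with $\pi^{(R)} = \sum_{i=1}^R W_i\I(Y_i=1)$, $W_i = V_i\prod_{i'<i}(1-V_{i'})$, $V_{i'}\stackrel{iid}{\sim}\mathrm{Beta}(1,\alpha)$, $Y_i\stackrel{iid}{\sim}\mathrm{Bern}(\mu(\mathrm{\Theta})/K)$. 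Conditioning on the $V_{i'}$'s, $\E\,\e^{t\pi^{(R)}} = \E\prod_{i=1}^R\big(1 + \tfrac{\mu(\mathrm{\Theta})}{K}(\e^{tW_i}-1)\big) = 1 + \tfrac{\mu(\mathrm{\Theta})}{K}\sum_{i=1}^R\E(\e^{tW_i}-1) + O(K^{-2})$ (the error depending only on $R$ and $t$), so, exactly as the $O(K^{-2})$ term is handled in Theorem \ref{thm.finite_convergence}, $\lim_{K\to\infty}\E\,\e^{tH_K^{(R)}(A)} = \exp\{\mu(A)\sum_{i=1}^R\E(\e^{tW_i}-1)\}$. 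Now let $R\to\infty$: since $\sum_i W_i = 1$ a.s.\ and $|\e^{tW_i}-1|\le |t|W_i$ for $t<0$, dominated convergence gives $\sum_{i=1}^R\E(\e^{tW_i}-1)\to \E\sum_{i=1}^\infty(\e^{tW_i}-1)$, and applying the identity $\E\sum_{i=1}^\infty g(W_i) = \int_0^1 g(\pi)\,\alpha\pi^{-1}(1-\pi)^{\alpha-1}d\pi$ (valid for $g\ge0$ with $g(0)=0$) with $g(\pi)=1-\e^{t\pi}$ shows this equals $\int_0^1(\e^{t\pi}-1)\alpha\pi^{-1}(1-\pi)^{\alpha-1}d\pi$. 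Thus the Laplace functional of $H_K^{(R)}$, after $K\to\infty$ and then $R\to\infty$, converges to that of $\BP(\alpha,\mu)$ in (\ref{eqn.BP_laplace})--(\ref{eqn.BP_mean_measure}); as this holds for all $A\in\mathcal{A}$ and $t<0$, convergence in distribution follows.

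The only step that is not bookkeeping is the intensity identity $\E\sum_i g(W_i) = \int_0^1 g(\pi)\,\alpha\pi^{-1}(1-\pi)^{\alpha-1}d\pi$, i.e.\ that the mean measure of the $\mathrm{Beta}(1,\alpha)$ stick-breaking (GEM$(\alpha)$) weight sequence is precisely the beta-process L\'evy measure; I would prove it by a direct computation, using that $-\log\prod_{i'=1}^{i-1}(1-V_{i'})\sim\mathrm{Gamma}(i-1,\alpha)$ so that conditioning on $T_{i-1}:=\prod_{i'<i}(1-V_{i'})$ and summing $\sum_{i\ge2}\tfrac{\alpha^{i-1}}{(i-2)!}(-\log t)^{i-2} = \alpha t^{-\alpha}$ collapses the sum over levels, followed by the change of variables $t\mapsto w/u$ in the resulting inner integral; alternatively one can simply cite this as the first correlation measure of the Poisson--Dirichlet law PD$(\alpha)$. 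I expect this identity to be the main obstacle. Note, however, that it is needed only to honor the stated order of limits: the reverse order, carried out in the first paragraph, follows directly from Lemma \ref{lem.construct_beta}, almost-sure monotone convergence, and Theorem \ref{thm.finite_convergence}, with no extra computation.
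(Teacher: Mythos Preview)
The paper supplies no separate proof of this corollary: it is simply announced as an immediate consequence of Lemma~\ref{lem.construct_beta} and Theorem~\ref{thm.finite_convergence}. Your first paragraph is exactly that two-line argument---letting $R\to\infty$ collapses each $\pi_k^{(R)}$ to a $\mathrm{Beta}(\alpha\mu(\mathrm{\Theta})/K,\alpha(1-\mu(\mathrm{\Theta})/K))$ variable by Lemma~\ref{lem.construct_beta}, recovering the sieve $H_K$, and then Theorem~\ref{thm.finite_convergence} handles $K\to\infty$. That is all the paper intends here.

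You are right that this delivers the limits in the reverse order from the literal statement; the paper does not separately justify the $K\to\infty$-first order at this point, so your second and third paragraphs go beyond what the paper does for the corollary. Your Laplace-functional computation is correct, and the identity you flag as the ``main obstacle''---that the marginal densities of the $\mathrm{Beta}(1,\alpha)$ stick-breaks sum to $\alpha\pi^{-1}(1-\pi)^{\alpha-1}$---is exactly what the paper proves later, in its Poisson-process proof of Theorem~\ref{thm.stickBP} (Section~\ref{sec.BPproof}, equations (\ref{eqn.f_1})--(\ref{eqn.f_seq})), via the same gamma/exponential substitution you sketch. So your argument for the stated order is sound, but it effectively imports the core calculation of the paper's second proof of the stick-breaking theorem; within the paper's logical flow the corollary is meant to be the cheap observation and that density-summation identity is the subsequent payoff, not an input to it.
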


\paragraph{First proof of Theorem \ref{thm.stickBP}.} We show that Theorem \ref{thm.stickBP} with a constant function $\alpha(\theta)$ results from Corollary \ref{cor.trunc_finite_approx} in the limit as $K\rightarrow\infty$ and $R\rightarrow\infty$. We first note that column sums of $Y$ are marginally distributed as $\mbox{Bin}(K,\mu(\rm{\Theta})/K)$, and are independent. This value gives the number of atoms receiving probability mass at step $i$, with $Y_{ki} = 1$ indicating the $k$th indexed atom is one of them. Let the set $\mathcal{I}_i^K = \{k : Y_{ki} = 1\}$ be the index set of these atoms at finite approximation level $K$. This set is constructed by selecting $C^K_i \sim \mbox{Bin}(K,\mu(\rm{\Theta})/K)$ values from $\{1,\dots,K\}$ uniformly without replacement. In the limit $K\rightarrow\infty$, $C_i^K \rightarrow C_i$ with $C_i \sim \mathrm{Pois}(\mu(\rm{\Theta}))$.

Given $k \in \mathcal{I}^K_i$, we know that $\pi_k$ has weight added to it from the $i$th break of its own stick-breaking construction. As a matter of accounting, we are interested other values $i'$ for which $Y_{ki'} = 1$, particularly when $i' < i$. We next show that in the limit $K\rightarrow\infty$, the index values in the set $\mathcal{I}_i := \mathcal{I}^{\infty}_i$ are always unique from those in previous sets ($i' < i$), meaning for a given column $L\leq R$, we see new index values with probability equal to one. We are therefore always adding probability mass to new atoms. Let $E$ be the event that there exists a number $k \in \mathcal{I}_i\cap\mathcal{I}_{i'}$ for $i\neq i'$ and $i,i' \leq L\leq R$. We can bound the probability of this event as follows:
\begin{eqnarray}
\mathrm{P}_K(\textstyle\bigcup_{i'<i\leq L}\mathcal{I}_i^K\cap\mathcal{I}_{i'}^K \neq \emptyset\, |\,\mu) &\leq & \sum\limits_{i'<i\leq L} \mathrm{P}_K(\mathcal{I}_i^K\cap\mathcal{I}_{i'}^K \neq \emptyset\,|\,\mu)\nn\\
&\leq & \sum_{i'<i\leq L}\sum_{k=1}^K\mathrm{P}_K(Y_{ki}Y_{ki'} = 1|\mu)\nn\\
&\leq & \frac{L(L-1)}{2}\frac{\mu(\rm{\Theta})^2}{K}.
\end{eqnarray}
Therefore, for any finite integer $L\leq R$, in the limit $K\rightarrow\infty$ the atoms $\theta_k$, $k\in \mathcal{I}_L$, are different from all previously observed atoms with probability one since $\mu$ is a diffuse measure. Since this doesn't depend on $R$, we can let $R\rightarrow\infty$. The proof concludes by recognizing that the resulting process is equivalent to (\ref{eqn.Horiginal}) $\hfill\square$

\subsection{Proof of Theorem \ref{thm.stickBP} via Poisson processes}\label{sec.BPproof}

In this section, we give a second proof based on the the Poisson process that is extended to a non-constant $\alpha(\theta)$. Specifically, we show that the construction of Theorem \ref{thm.stickBP} has the distribution of a beta process by showing that its Laplace functional has the form given in Equations (\ref{eqn.BP_laplace}) and (\ref{eqn.BP_mean_measure}). To this end, we use the following lemma to obtain an equivalent representation of Theorem \ref{thm.stickBP}.
\begin{lemma}\label{lem.stick_exponential}
Let $(V_1,\dots,V_r)$ be i.i.d.\ $\mathrm{Beta}(1,\alpha)$. If $T \sim \mathrm{Gamma}(r,\alpha)$, then the random variables $\prod_{j=1}^r(1-V_j)$ and $\exp\{-T\}$ are equal in distribution.
\end{lemma}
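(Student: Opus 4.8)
The plan is to pass to logarithms and reduce everything to the additivity of the gamma family under convolution. First I would record the law of a single factor. If $V \sim \mathrm{Beta}(1,\alpha)$, then $1-V$ has density $\alpha u^{\alpha-1}$ on $(0,1)$, and the change of variables $u = e^{-x}$ shows that $-\log(1-V)$ has density $\alpha e^{-\alpha x}$ on $(0,\infty)$; that is, $-\log(1-V) \sim \mathrm{Exp}(\alpha) = \mathrm{Gamma}(1,\alpha)$. Since the $V_j$ are independent, $-\log\prod_{j=1}^{r}(1-V_j) = \sum_{j=1}^{r}\bigl(-\log(1-V_j)\bigr)$ is a sum of $r$ independent $\mathrm{Exp}(\alpha)$ variables, hence $\mathrm{Gamma}(r,\alpha)$ by the standard identity $\mathrm{Gamma}(r_1,\alpha)\ast\mathrm{Gamma}(r_2,\alpha) = \mathrm{Gamma}(r_1+r_2,\alpha)$ (immediate from the Laplace transform $(\alpha/(\alpha+s))^{r}$). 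Exponentiating back gives $\prod_{j=1}^{r}(1-V_j) \stackrel{d}{=} \exp\{-T\}$ with $T\sim\mathrm{Gamma}(r,\alpha)$, which is exactly the claim.

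An alternative I would keep in reserve, if one prefers not to quote the convolution fact, is a direct comparison of Mellin/Laplace transforms: for $V\sim\mathrm{Beta}(1,\alpha)$ one computes $\E\,(1-V)^{s} = \alpha/(\alpha+s)$ for every $s\ge 0$, so by independence $\E\,\prod_{j=1}^{r}(1-V_j)^{s} = (\alpha/(\alpha+s))^{r}$, which is precisely $\E\,e^{-sT}= \E\,(\exp\{-T\})^{s}$ for $T\sim\mathrm{Gamma}(r,\alpha)$. Both $\prod_{j=1}^{r}(1-V_j)$ and $\exp\{-T\}$ take values in $[0,1]$, and a probability law on a bounded interval is determined by its moments (the Hausdorff moment problem), so the two laws agree.

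There is no genuine obstacle here; the lemma is elementary. The only point requiring care is bookkeeping the parametrization: $\mathrm{Gamma}(r,\alpha)$ is taken with shape $r$ and rate $\alpha$, so that $\mathrm{Gamma}(1,\alpha)$ is $\mathrm{Exp}(\alpha)$ and the relevant transform is $(\alpha/(\alpha+s))^{r}$. I would make sure this convention is stated once and used consistently wherever the lemma is applied, in particular in the Poisson-process proof of Theorem~\ref{thm.stickBP} that follows.
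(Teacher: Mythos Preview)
Your primary argument is correct and is exactly the paper's proof: set $\xi_j=-\ln(1-V_j)$, note $\xi_j\sim\mathrm{Exp}(\alpha)$ by change of variables, and sum $r$ independent copies to obtain $\mathrm{Gamma}(r,\alpha)$. The Mellin-transform alternative you offer is a valid second route, but it is not needed and the paper does not use it.
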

\begin{proof}
 Define $\xi_k = -\ln(1-V_k)$. By a change of variables, $\xi_k \sim \mathrm{Exp}(\alpha)$. The function $-\ln \prod_{j=1}^r(1-V_j) = \sum_{k=1}^r \xi_k$, where the $\xi_k$ are i.i.d.\ because $V_k$ are i.i.d. Therefore $T := \sum_{k=1}^r\xi_k$ has a $\mathrm{Gam}(r,\alpha)$ distribution and the result follows.
\end{proof}
Using Lemma \ref{lem.stick_exponential}, we have a construction of $H$ equivalent to that given in Theorem \ref{thm.stickBP},
\begin{equation}\label{eqn.H}
 H = \sum_{j=1}^{C_1}V_{1j}\delta_{\theta_{1j}} + \sum_{i=2}^{\infty} \sum_{j=1}^{C_i} V_{ij}\mathrm{e}^{-T_{ij}}\delta_{\theta_{ij}},\quad C_i \iid \mathrm{Pois}(\mu(\rm{\Theta})),
\end{equation}
\begin{equation}\nonumber
V_{ij}\,|\,\theta_{ij} \iid \mathrm{Beta}(1,\alpha(\theta_{ij})),\quad  T_{ij}\,|\,\theta_{ij} \stackrel{ind}{\sim} \mathrm{Gam}(i-1,\alpha(\theta_{ij})),\quad \theta_{ij} \iid \mu/\mu(\rm{\Theta}).
\end{equation}
\paragraph{Second proof of Theorem \ref{thm.stickBP}.} By applying Lemmas \ref{lem.marked} and \ref{lem.superposition}, we observe that the construction in (\ref{eqn.H}) has an underlying Poisson process as follows: Let $H_i := \sum_{j=1}^{C_i} \pi_{ij}\delta_{\theta_{ij}}$ and $H = \sum_{i=1}^{\infty} H_i$, where $\pi_{1j} = V_{1j}$ and $\pi_{ij} := V_{ij}\e^{-T_{ij}}, i > 1$. The set of atoms in each $H_i$ forms an independent Poisson process $\rmPi_i^*$ on $\rm{\Theta}$ with mean measure $\mu$. The atoms in Poisson process $\rmPi_i^*$ are marked with weights $\pi \in [0,1]$ conditioned on $\theta$ that are independent $\lambda_i$, where $\lambda_i$ is the probability measure on $\pi_{ij}$, which we derive later.  

It follows from Lemma \ref{lem.marked} that each $H_i$ is a function of an Poisson process $\rmPi_i = \{(\theta_{i},\pi_{i})\}$ on $\rm{\Theta} \times [0,1]$ with mean measure $\mu\times \lambda_i$. Therefore, by Lemma \ref{lem.superposition} $H$ is a function of a Poisson process $\rmPi = \bigcup_{i=1}^{\infty} \rmPi_i$ with mean measure $\nu = \sum_{i=1}^{\infty}\nu_i = \mu\times\sum_{i=1}^{\infty} \lambda_i$. We see that calculating $\nu$ for (\ref{eqn.H}) amounts to summing the L\'{e}vy measures $\lambda_i$. These measures fall into two cases, which we give below.

\emph{Case} $i=1:$~ Since $V_{1j}\,|\,\theta_{1j} \sim \mathrm{Beta}(1,\alpha(\theta_{1j}))$, the Poisson process $\rmPi_1$ underlying $H_1$ has mean measure $\mu\times\lambda_1$, with L\'{e}vy measure
\begin{equation}\label{eqn.f_1}
 \lambda_1(d\pi) = \alpha(\theta) (1-\pi)^{\alpha(\theta)-1}d\pi.
\end{equation}
We write $\lambda_1(d\pi) = f_1(\pi|\alpha,\theta)d\pi$ where $f_1(\pi|\alpha,\theta)$ is the density of $\nu_1$ with respect to Lebesgue measure $d\pi$.

\emph{Case} $i > 1:$~ The L\'{e}vy measure of the Poisson process $\rmPi_i$ underlying $H_i$ for $i > 1$ requires more work to derive its associated density $f_i$. Recall that $\pi_{ij} := V_{ij}\exp\{-T_{ij}\}$, where $V_{ij}\,|\,\theta_{ij} \sim \mathrm{Beta}(1,\alpha(\theta_{ij}))$ and $T_{ij}\,|\,\theta_{ij} \sim \mathrm{Gamma}(i-1,\alpha(\theta_{ij}))$. First, let $W_{ij} := \exp\{-T_{ij}\}$. Then by a change of variables,
$p_W(w|i,\alpha,\theta) = \frac{\alpha(\theta)^{i-1}}{(i-2)!} w^{\alpha(\theta)-1}(-\ln w)^{i-2}.$
Using the product distribution formula for two random variables \citep{Rohatgi:1976}, the density of $\pi_{ij} = V_{ij}W_{ij}$ is
\begin{eqnarray}\label{eqn.f_i}
f_i(\pi|\alpha,\theta) \hspace{-2mm}&=&\hspace{-2mm} \int_{\pi}^1 w^{-1}p_V(\pi/w|\alpha,\theta)p_W(w|i,\alpha,\theta)dw\\
\hspace{-2mm}&=&\hspace{-2mm} \frac{\alpha(\theta)^{i}}{(i-2)!}\int_{\pi}^1 w^{-1}(\ln \frac{1}{w})^{i-2}(w-\pi)^{\alpha(\theta)-1} dw\nonumber.
\end{eqnarray}
This integral does not have a closed-form solution. 

\emph{Calculating $\lambda:$}~ We have decomposed the $\rm{\Sigma}$-finite measure $\lambda$ into a sequence of finite measures that can be added to calculate the mean measure of the Poisson process underlying (\ref{eqn.Horiginal}). Since
$$\nu(d\theta,d\pi) = \sum_{i=1}^{\infty} (\mu\times \lambda_i)(d\theta, d\pi) = \mu(d\theta)d\pi\sum_{i=1}^{\infty} f_i(\pi|\alpha,\theta),$$
by showing that $\sum_{i=1}^{\infty} f_i(\pi|\alpha,\theta) = \alpha(\theta)\pi^{-1}(1-\pi)^{\alpha(\theta)-1}$, we complete the proof. From Equations (\ref{eqn.f_1}) and (\ref{eqn.f_i}) we have that that $\lambda(d\pi) = \alpha(\theta)(1-\pi)^{\alpha(\theta)-1}d\pi + \sum_{i=2}^{\infty}f_i(\pi|\alpha,\theta)d\pi$, where
\begin{eqnarray}\label{eqn.f_seq}
 \sum_{i=2}^{\infty} f_i(\pi|\alpha,\theta) &=& \sum_{i=2}^{\infty} \frac{\alpha(\theta)^{i}}{(i-2)!}\int_{\pi}^1 w^{\alpha(\theta)-2}(\ln \frac{1}{w})^{i-2}(1-\frac{\pi}{w})^{\alpha(\theta)-1} dw\nonumber\\
&=&\hspace{-2mm}\alpha(\theta)^2\int_{\pi}^1  w^{\alpha(\theta)-2}(1-\frac{\pi}{w})^{\alpha(\theta)-1}dw \sum_{i=2}^{\infty}\frac{\alpha(\theta)^{i-2}}{(i-2)!}(\ln \frac{1}{w})^{i-2}\nonumber\\
&=&\hspace{-2mm}\alpha(\theta)^2\int_{\pi}^1  w^{-2}(1-\pi/w)^{\alpha(\theta)-1}dw\,.
\end{eqnarray}
The second equality is by monotone convergence and Fubini's theorem and leads to an exponential power series. The last integral is equal to $\frac{\alpha(\theta)(1-\pi)^{\alpha(\theta)}}{\pi}$. Adding the two terms shows that the mean measure equals that of a beta process given in Equation (\ref{eqn.BP_mean_measure}). \hfill $\square$

\section{Almost sure truncations of the beta process}\label{sec.truncations}
Truncated beta processes can be used in MCMC sampling schemes, and also arise in the variational inference setting 
\citep{Teh:2009,Paisley:2011,Wainwright:2008}. Poisson process 
representations are useful for characterizing the part of the beta process 
that is being thrown away in the truncation. Consider a stick-breaking construction of the beta process 
truncated after group $R$, defined as $H^{(R)} = \sum_{i=1}^R H_i$. 
The part being discarded, $H - H^{(R)}$, has an underlying Poisson process 
with mean measure 
\begin{equation}
\nu_R^+(d\theta,d\pi)  := \sum_{i=R+1}^{\infty} \nu_i(d\theta,d\pi) =  \mu(d\theta)\times\sum_{i=R+1}^{\infty} \lambda_i(d\pi),
\end{equation}
and a corresponding counting measure $N_R^+(d\theta,d\pi)$. This measure 
contains information about the missing atoms; for example, the number of 
missing atoms having weight $\pi \geq \epsilon$ is Poisson distributed with 
parameter $\nu_R^+(\rm{\Theta},[\epsilon,1])$.

For truncated beta processes, a measure of closeness to the true beta process is helpful when selecting truncation levels. We derive approximation error bounds in the context of the beta-Bernoulli process. The definition of the Bernoulli process is
\begin{definition}[Beta-Bernoulli process]\label{def.BeP}
Draw a beta process $H \sim \BP(\alpha,\mu)$ on $(\rm{\Theta},\mathcal{A})$ with $\mu$ finite. Define a process $X$ on the atoms of $H$ such that $X(\{\theta\})|H \sim \mbox{Bern}(H(\{\theta\}))$ independently for all $\theta \in H$. Then $X$ is a Bernoulli process, denoted $X\,|\,H \sim \mbox{BeP}(H)$. 
\end{definition}

Returning to the bound, let data $Y_n \sim f(X_n,\phi_n)$, where $X_n$ is a Bernoulli process taking either $H$ or $H^{(R)}$ as parameters, and $\phi_n$ is a set of additional parameters (which could be globally shared). Let $\Y = (Y_1,\dots,Y_M)$. One measure of closeness is the total variation distance between the marginal density of $\Y$ under the beta process, denoted $\m_{\infty}(\Y)$, and the process truncated at group $R$, denoted $\m_R(\Y)$. This measure originated with work on truncated Dirichlet processes in \cite{Ishwaran:2001} and was extended to the beta process in \cite{Teh:2009}. 

After slight modification to account for truncating groups rather than atoms, we have
\begin{equation}\label{eqn.bound}
\frac{1}{2}\int |\m_R(\Y) - \m_{\infty}(\Y)|d\Y  \leq  \mathbb{P}\left\lbrace\exists (i,j), i > R, 1 \leq n \leq M : X_n(\theta_{ij}) \neq 0\right\rbrace.
\end{equation}
We derive this bound in the appendix. In words, this says that one half the 
total variation between $\m_R$ and $\m_{\infty}$ is less than one minus the 
probability that, in $M$ Bernoulli processes with parameter 
$H\sim\mathrm{BP}(\alpha,\mu)$, $X_n(\theta) = 0$ for all $\theta \in H_i$ when $i> R$. 
In \cite{Teh:2009} and \cite{Paisley:2011}, a looser version of this bound was obtained. Using the Poisson process 
representation of $H$, we can give an exact form of this bound. 

\begin{theorem}[Truncated stick-breaking constructions]\label{thm.truncations}
Let $X_{1:M} \iid \mathrm{BeP}(H)$ with $H \sim \mathrm{BP}(\alpha,\mu)$ constructed as in (\ref{eqn.Horiginal}). For a truncation level $R$, let $E$ be the event that there exists an index $(i,j)$ with $i>R$ such that $X_n(\theta_{ij}) = 1$. Then the bound in (\ref{eqn.bound}) equals
$$\mathbb{P}(E) = 1 - \exp\left\lbrace - \int_{(0,1]} \nu_R^+(\rm{\Theta},d\pi)\left(1 - (1-\pi)^M\right)\right\rbrace.$$
\end{theorem}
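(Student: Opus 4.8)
The plan is to compute $\mathbb{P}(E)$ directly by conditioning on the Poisson process underlying the discarded part $H - H^{(R)}$ and then applying Poisson thinning. From the second proof of Theorem \ref{thm.stickBP}, the atoms and weights $\{(\theta_{ij},\pi_{ij}) : i > R\}$ of the groups beyond level $R$ form a Poisson process $\rmPi_R^+$ on $\rm{\Theta}\times(0,1]$ with mean measure $\nu_R^+(d\theta,d\pi) = \mu(d\theta)\sum_{i>R}\lambda_i(d\pi)$.

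First I would condition on $\rmPi_R^+$. By Definition \ref{def.BeP}, each Bernoulli process $X_n$ attaches to an atom with weight $\pi$ an independent $\mathrm{Bern}(\pi)$ value, and $X_1,\dots,X_M$ are conditionally i.i.d.\ given $H$; hence the probability that $X_n(\theta_{ij}) = 0$ for every $n \le M$ equals $(1-\pi_{ij})^M$, and these events are independent across distinct atoms since the Bernoulli marks on distinct atoms are independent. Therefore $E$ occurs if and only if at least one point survives the independent thinning of $\rmPi_R^+$ that retains a point $(\theta,\pi)$ with probability $p(\theta,\pi) := 1 - (1-\pi)^M$.

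By the thinning theorem for Poisson processes, the retained points again form a Poisson process, with mean measure $p(\theta,\pi)\,\nu_R^+(d\theta,d\pi)$, whose total mass is $\Lambda_R := \int_{\rm{\Theta}\times(0,1]} (1-(1-\pi)^M)\,\nu_R^+(d\theta,d\pi) = \int_{(0,1]}(1-(1-\pi)^M)\,\nu_R^+(\rm{\Theta},d\pi)$, the last equality because the integrand does not involve $\theta$. Since the number of retained points is $\mathrm{Pois}(\Lambda_R)$, we get $\mathbb{P}(E^c) = e^{-\Lambda_R}$ and hence $\mathbb{P}(E) = 1 - e^{-\Lambda_R}$, which is the claimed identity. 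Equivalently, one can bypass the thinning theorem and read off the same answer from the probability generating functional $\mathbb{E}\prod_{(\theta,\pi)\in\rmPi_R^+}g(\theta,\pi) = \exp\{-\int(1-g)\,d\nu_R^+\}$ with $g(\theta,\pi) = (1-\pi)^M$, the same exponential formula that underlies the Laplace functional (\ref{eqn.BP_laplace}).

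I do not expect a real obstacle; the only point that genuinely needs checking is that $\Lambda_R$ is finite, which is what makes the thinned mean measure a bona fide finite measure (and the exponential formula applicable). Using Bernoulli's inequality $1-(1-\pi)^M \le M\pi$ on $[0,1]$ together with $\nu_R^+ \le \nu$ gives $\Lambda_R \le M\int_{(0,1]}\pi\,\nu(\rm{\Theta},d\pi) = M\,\mathbb{E}[H(\rm{\Theta})] = M\gamma < \infty$, since $\mu(\rm{\Theta}) = \gamma$. A minor bookkeeping remark is that $E$ is measurable, being the countable union over $(i,j)$ with $i>R$ and $n\le M$ of the events $\{X_n(\theta_{ij}) = 1\}$.
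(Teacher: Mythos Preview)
Your argument is correct. The paper actually gives two proofs of this theorem: the one in the main text discretizes $[0,1]$ via simple functions $g_n(\pi)=\sum_k b_{nk}\mathbf 1_{B_{nk}}(\pi)$, computes $\mathbb{E}\prod_{i>R,j}(1-\pi_{ij})^M$ as a product of Poisson probability generating functions over the bins, and passes to the limit; a second proof in the appendix uses the marking theorem, attaching to each $(\theta,\pi)$ the full vector $U\in\{0,1\}^M$ of Bernoulli outcomes and reading off $\mathbb{P}(E^c)=\mathbb{P}(N_R^+(\rm{\Theta},[0,1],\{0,1\}^M\setminus\{\boldsymbol 0\})=0)$. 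Your thinning argument is essentially the same as the appendix proof (thinning with retention probability $1-(1-\pi)^M$ is exactly marking with $U$ and keeping the points with $U\neq\boldsymbol 0$), and your alternative via the probability generating functional is the same computation in one line. The simple-function proof is more hands-on but buys nothing extra here; your route is cleaner, and your explicit check that $\Lambda_R\le M\gamma<\infty$ is a detail the paper leaves implicit.
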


\paragraph{\emph{Proof}}(Simple functions)
Let the set $B_{nk} = \left[\frac{k-1}{n},\frac{k}{n}\right)$ and $b_{nk} = \frac{k-1}{n}$, where $n$ and $k \leq n$ are positive integers.  Approximate the variable $\pi\in [0,1]$ with the simple function $g_n(\pi) = \sum_{k=1}^n b_{nk}\boldsymbol{1}_{B_{nk}}(\pi)$. We calculate the truncation error term, $\mathbb{P}(E^c)=\E[\prod_{i>R,j}(1-\pi_{ij})^M]$, by approximating with $g_n$, re-framing the problem as a Poisson process with mean and counting measures $\nu_R^+$ and $N_R^+(\rm{\Theta},B)$, and then taking a limit:
\begin{eqnarray}
\E\left[\prod\nolimits_{i>R,j}(1-\pi_{ij})^M\right]& = & \hspace{-1mm}\lim_{n\rightarrow\infty} \prod_{k=2}^n\E\left[(1-b_{nk})^{M\cdot N_R^+(\rm{\Theta},B_{nk})}\right]\\
& = & \hspace{-1mm}\exp\left\lbrace \lim_{n\rightarrow\infty} - \sum_{k=2}^n \nu_R^+(\rm{\Theta},B_{nk})\left(1-(1-b_{nk})^M\right)\right\rbrace .\nonumber 
\end{eqnarray}
For a fixed $n$, this approach divides the interval $[0,1]$ into disjoint regions that can be analyzed separately as independent Poisson processes. Each region uses the approximation $\pi \approx g_n(\pi)$, with  $\lim_{n\rightarrow\infty}g_n(\pi) = \pi$, and $N_R^+(\rm{\Theta},B)$ counts the number of atoms with weights that fall in the interval $B$. Since $N_R^+$ is Poisson distributed with mean $\nu_R^+$, the expectation follows.$\hfill\square$\newline

One can use approximating simple functions to give an arbitrarily close approximation of Theorem 3; since $\nu_R^+ = \nu_{R-1}^+ - \nu_{R}$ and $\nu_0^+ = \nu$, performing a sweep of truncation levels requires approximating only one additional integral for each increment of $R$. From the Poisson process, we also have the following analytical bound, which we present for a constant $\alpha(\theta) = \alpha$.
\begin{corollary}[A bound on Theorem \ref{thm.truncations}]\label{cor.truncations}
Given the setup in Theorem \ref{thm.truncations} with $\alpha(\theta) = \alpha$, an upper bound on $\mathbb{P}(E)$ is $$\mathbb{P}(E) \leq 1 - \exp\left\lbrace-\mu(\mathrm{\Theta}) M \left(\frac{\alpha}{1+\alpha}\right)^R\right\rbrace.$$
\end{corollary}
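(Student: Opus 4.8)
The plan is to start from the exact identity in Theorem~\ref{thm.truncations} and replace the integrand by a linear function of $\pi$. Since $x\mapsto 1-e^{-x}$ is nondecreasing on $[0,\infty)$, it suffices to prove
$$\int_{(0,1]}\nu_R^+(\rm{\Theta},d\pi)\left(1-(1-\pi)^M\right)\;\le\;\mu(\rm{\Theta})\,M\left(\frac{\alpha}{1+\alpha}\right)^R,$$
and the stated bound on $\mathbb{P}(E)$ then follows immediately by monotonicity.

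To control the left-hand side I would invoke the elementary inequality $1-(1-\pi)^M\le M\pi$ for $\pi\in[0,1]$ (equivalently Bernoulli's inequality $(1-\pi)^M\ge 1-M\pi$), which trades the exact but implicit integral of Theorem~\ref{thm.truncations} for a first-moment computation:
$$\int_{(0,1]}\nu_R^+(\rm{\Theta},d\pi)\left(1-(1-\pi)^M\right)\;\le\;M\int_{(0,1]}\pi\,\nu_R^+(\rm{\Theta},d\pi).$$
Recalling that $\nu_R^+(\rm{\Theta},d\pi)=\mu(\rm{\Theta})\sum_{i=R+1}^\infty\lambda_i(d\pi)$, that each $\lambda_i$ is the law of the group-$i$ weight $\pi_{ij}$, and interchanging the nonnegative sum with the integral by Tonelli's theorem, the right-hand side equals $M\,\mu(\rm{\Theta})\sum_{i=R+1}^\infty\E[\pi_{ij}]$.

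The last step is to evaluate $\E[\pi_{ij}]$ directly from the construction~(\ref{eqn.Horiginal}): $\pi_{ij}=V_{ij}^{(i)}\prod_{l=1}^{i-1}(1-V_{ij}^{(l)})$ is a product of independent $\mathrm{Beta}(1,\alpha)$ variables, so $\E[\pi_{ij}]=\E[V]\,\bigl(\E[1-V]\bigr)^{i-1}=\frac{1}{1+\alpha}\left(\frac{\alpha}{1+\alpha}\right)^{i-1}$. Summing this geometric series from $i=R+1$ gives $\sum_{i=R+1}^\infty\E[\pi_{ij}]=\left(\frac{\alpha}{1+\alpha}\right)^R$, which combined with the two displays above yields the claimed bound.

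There is no genuine obstacle here: the argument is a short chain of standard estimates. The only points worth a line of justification are the interchange of summation and integration, which holds by nonnegativity, and the observation that the linear relaxation $1-(1-\pi)^M\le M\pi$ is exactly what converts the non-closed-form expression of Theorem~\ref{thm.truncations} into the first moments $\E[\pi_{ij}]$, which the stick-breaking representation makes trivial to compute.
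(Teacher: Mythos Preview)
Your proof is correct and lands on exactly the same first-moment computation as the paper: both arguments reduce $\mathbb{P}(E)$ to $M\int_{(0,1]}\pi\,\nu_R^+(\Theta,d\pi)=\mu(\Theta)M\sum_{i>R}\E[\pi_{ij}]=\mu(\Theta)M(\alpha/(1+\alpha))^R$, evaluated from the stick-breaking weights just as you do.

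The only difference is the inequality used to reach that first moment. The paper works probabilistically, writing $\mathbb{P}(E^c)=\E\bigl[\prod_{i>R,j}(1-\pi_{ij})^M\bigr]$ and invoking Jensen for the convex map $y\mapsto y^M$ to get $\E[Y^M]\ge(\E[Y])^M$; it then recovers $\E[Y]=\exp\{-\int\pi\,\nu_R^+\}$ by specializing Theorem~\ref{thm.truncations} to $M=1$. You instead stay inside the L\'evy integral of Theorem~\ref{thm.truncations} and apply the pointwise Bernoulli bound $1-(1-\pi)^M\le M\pi$. The two inequalities are equivalent here (plugging the L\'evy--Khintchine identities for $\E[Y^M]$ and $\E[Y]$ into Jensen yields precisely your integral inequality), so neither approach is sharper; yours is marginally more elementary in that it avoids the detour through Jensen and the second appeal to Theorem~\ref{thm.truncations}.
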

\begin{proof}
 From the proof of Theorem \ref{thm.truncations}, we have $$\textstyle \mathbb{P}(E) = 1 - \E[\prod_{i>R,j}(1-\pi_{ij})^M] \leq 1 - \E[\prod_{i>R,j}(1-\pi_{ij})]^M.$$ This second expectation can be calculated as in Theorem \ref{thm.truncations} with $M$ replaced by a one. We therefore wish to calculate the negative of $\textstyle \int_0^1 \pi\nu_R^+(\rm{\Theta},d\pi)$. Let $q_r$ be the distribution of the $r$th break from a $\mathrm{Beta}(1,\alpha)$ stick-breaking process. Then this integral is equal to $\mu(\mathrm{\Theta}) \sum_{r=R+1}^{\infty}\E_{q_r}[\pi].$ Since $\E_{q_r}[\pi]=\alpha^{-1}(\frac{\alpha}{1+\alpha})^r$, this solves to $\mu(\mathrm{\Theta}) (\frac{\alpha}{1+\alpha})^R$.
\end{proof}
We observe that the term in the exponential equals the negative of $M\int_0^1 \pi \nu_R^+(\rm{\Theta},d\pi)$, which is the expected number of missing ones in $M$ observations from the truncated Bernoulli process. Other stochastic processes can also take $H$ as parameter. The negative binomial process is one such process \citep{Broderick:2012a,Zhou:2012c,Heaukulani:2013} and has the following definition.

\begin{definition}[Beta-Negative binomial process]\label{def.BeP}
Draw a beta process $H \sim \BP(\alpha,\mu)$ on $(\rm{\Theta},\mathcal{A})$ with $\mu$ finite. Define a process $X$ on the atoms of $H$ such that $X(\{\theta\})|H \sim \mbox{NegBin}(r,H(\{\theta\}))$ independently for all $\theta \in H$, where $r > 0$. Then $X$ is a negative binomial process, denoted $X\,|\,H \sim \mbox{NBP}(H)$. 
\end{definition}

In the derivation in the appendix, we show how the inequality in Equation (\ref{eqn.bound}) applies to the beta-negative binomial process as well. The only difference is in calculating the probability of the event we call $E$ above. This results in the following.

\begin{corollary}[A negative binomial extension]\label{cor.truncations2}
Given the setup in Theorem \ref{thm.truncations}, but with $X_{1:M} \iid \mathrm{NBP}(r,H)$ we have
\begin{eqnarray}
\mathbb{P}(E) &=& 1 - \exp\left\lbrace - \int_{(0,1]} \nu_R^+(\rm{\Theta},d\pi)\left(1 - (1-\pi)^{Mr}\right)\right\rbrace,\nn\\
 \mathbb{P}(E) &\leq & 1 - \exp\left\lbrace-\mu(\mathrm{\Theta}) Mr \left(\frac{\alpha}{1+\alpha}\right)^{R}\right\rbrace.\nn
\end{eqnarray}
\end{corollary}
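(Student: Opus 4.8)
The plan is to exploit the fact that the inequality (\ref{eqn.bound}) is derived in the appendix in a way that already covers the beta--negative binomial process; the only object that changes relative to Theorem \ref{thm.truncations} is the probability $\mathbb{P}(E)$ of the event $E=\{\exists\,(i,j),\ i>R,\ n\le M:\ X_n(\theta_{ij})\neq 0\}$. So the corollary reduces to two computations that run in exact parallel with the proofs of Theorem \ref{thm.truncations} and Corollary \ref{cor.truncations}. Conditionally on $H$, the variables $X_n(\{\theta_{ij}\})$ are independent $\mathrm{NegBin}(r,\pi_{ij})$, so the starting identity is
$$\mathbb{P}(E^c) = \E\Big[\prod\nolimits_{i>R,j}\prod\nolimits_{n=1}^{M}\mathbb{P}\big(X_n(\theta_{ij})=0\,\big|\,\pi_{ij}\big)\Big].$$

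First I would evaluate the per-atom factor: for $X\sim\mathrm{NegBin}(r,\pi)$ one has $\mathbb{P}(X=0\,|\,\pi)=(1-\pi)^{r}$, so the $M$ independent observations contribute $(1-\pi_{ij})^{Mr}$ at the atom $\theta_{ij}$ --- the sole change from the Bernoulli case, where that factor is $(1-\pi_{ij})^{M}$. With this substitution the simple-function argument of Theorem \ref{thm.truncations} applies verbatim with $M$ replaced by $Mr$: approximate $\pi$ by $g_n(\pi)=\sum_k b_{nk}\boldsymbol{1}_{B_{nk}}(\pi)$, split $(0,1]$ into the disjoint bins $B_{nk}$ on which $N_R^+(\rm{\Theta},\cdot)$ restricts to independent Poisson variables with means $\nu_R^+(\rm{\Theta},B_{nk})$, use the Poisson identity $\E[s^{N}]=\exp\{-\lambda(1-s)\}$ with $s=(1-b_{nk})^{Mr}$, and pass to the limit $n\to\infty$. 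This produces $\mathbb{P}(E^c)=\exp\{-\int_{(0,1]}\nu_R^+(\rm{\Theta},d\pi)(1-(1-\pi)^{Mr})\}$, the first displayed equation.

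For the analytic bound I would follow the proof of Corollary \ref{cor.truncations}. One route is Jensen's inequality applied to the convex map $x\mapsto x^{Mr}$, giving $\mathbb{P}(E)\le 1-\E[\prod_{i>R,j}(1-\pi_{ij})]^{Mr}$, after which the single-copy expectation is evaluated exactly as in Corollary \ref{cor.truncations}: it equals $\exp\{-\int_0^1\pi\,\nu_R^+(\rm{\Theta},d\pi)\}=\exp\{-\mu(\rm{\Theta})(\alpha/(1+\alpha))^{R}\}$, where the integral is computed by writing it as $\mu(\rm{\Theta})\sum_{r>R}\E_{q_r}[\pi]$ with $\E_{q_r}[\pi]=\alpha^{-1}(\alpha/(1+\alpha))^{r}$ and summing the geometric series. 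A shorter route inserts the elementary inequality $1-(1-\pi)^{Mr}\le Mr\,\pi$ directly into the exact formula and reuses the same evaluation of $\int_0^1\pi\,\nu_R^+(\rm{\Theta},d\pi)$. Either way one obtains $\mathbb{P}(E)\le 1-\exp\{-\mu(\rm{\Theta})\,Mr\,(\alpha/(1+\alpha))^{R}\}$.

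The one point that needs care --- and the closest thing to an obstacle --- is that both forms of the bounding step require the exponent $Mr$ to be at least $1$ (equivalently, convexity of $x\mapsto x^{Mr}$, or Bernoulli's inequality $(1-\pi)^{Mr}\ge 1-Mr\pi$); when $r\in(0,1)$ and $M$ is small this fails, the second-moment term $\int_0^1\pi^2\,\nu_R^+$ enters with the wrong sign, and the clean bound should be read under $Mr\ge 1$ (automatic when $r\ge 1$, the regime of interest). A minor secondary check is that the negative binomial parameterization used here indeed yields $\mathbb{P}(X=0\,|\,\pi)=(1-\pi)^{r}$; granting that, the exact identity needs no such restriction and everything else is a transcription of the two earlier proofs.
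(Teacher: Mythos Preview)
Your proposal is correct and matches the paper's own proof almost line for line: the paper simply notes that $\mathbb{P}(E^c)=\E\bigl[\prod_{i>R,j}(1-\pi_{ij})^{Mr}\bigr]$ and then invokes the proof of Theorem~\ref{thm.truncations} for the exact formula and the Jensen-type step $\E\bigl[\prod(1-\pi_{ij})^{Mr}\bigr]\ge\bigl(\E\prod(1-\pi_{ij})\bigr)^{Mr}$ together with Corollary~\ref{cor.truncations} for the bound. Your caveat that this last inequality (equivalently, $(1-\pi)^{Mr}\ge 1-Mr\pi$) needs $Mr\ge 1$ is a legitimate restriction that the paper leaves implicit; the exact identity in the first line is unaffected.
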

\begin{proof}
After recognizing that $\textstyle \mathbb{P}(E^c) = \E[\prod_{i>R,j}(1-\pi_{ij})^{Mr}] \geq \E[\prod_{i>R,j}(1-\pi_{ij})]^{Mr}$ for this problem, proof of the first line follows from the proof of Theorem \ref{thm.truncations} and proof of the second line follows from the proof of Corollary \ref{cor.truncations}.
\end{proof}

\section{Posterior inference for the beta process}\label{sec.MCMC}
We present a simple method for sampling from the posterior of the beta-Binomial and beta-negative binomial processes. We show that posterior sampling can be separated into two parts: Sampling the weights of the almost surely finite number of observed atoms and sampling the weights of the infinitely remaining unobserved atoms. To this end, we will need the following lemma for sampling from a beta distribution.
\begin{lemma}[Product representation of a beta random variable]\label{lem.prod_beta}
Let $\eta_1\sim Beta(a_1,a_2)$, $\eta_2\sim Beta(b_1,b_2)$ and $\eta_3 \sim Beta(a_1+a_2,b_1+b_2)$, all independently. If we define the weighted average $\pi = \eta_3 \eta_1 + (1-\eta_3)\eta_2$, then $\pi \sim Beta(a_1+b_1,a_2+b_2)$. 
\end{lemma}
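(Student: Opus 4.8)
The plan is to verify the claimed identity by computing the distribution of $\pi = \eta_3\eta_1 + (1-\eta_3)\eta_2$ directly, and the cleanest route is through the Dirichlet/Gamma representation of the beta variables rather than a brute-force convolution. First I would introduce independent gamma random variables $G_1 \sim \mathrm{Gam}(a_1,1)$, $G_2 \sim \mathrm{Gam}(a_2,1)$, $G_3 \sim \mathrm{Gam}(b_1,1)$, $G_4 \sim \mathrm{Gam}(b_2,1)$, all independent. Using the standard fact that $\mathrm{Gam}/(\mathrm{Gam}+\mathrm{Gam})$ is beta and is independent of the sum, I would realize $\eta_1 \stackrel{d}{=} G_1/(G_1+G_2)$, $\eta_2 \stackrel{d}{=} G_3/(G_3+G_4)$, and $\eta_3 \stackrel{d}{=} (G_1+G_2)/(G_1+G_2+G_3+G_4)$. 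The key point is that these three constructed variables are mutually independent — $\eta_1$ depends only on the ratio within the first pair, $\eta_2$ only on the ratio within the second pair, and $\eta_3$ only on the ratio of the two block-sums, and these are jointly independent by the aggregation property of the Dirichlet distribution. Hence the joint law of $(\eta_1,\eta_2,\eta_3)$ built from the gammas matches the law in the statement.

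Next I would substitute into the definition of $\pi$ and watch the telescoping:
\begin{equation}\nonumber
\pi = \frac{G_1+G_2}{G_1+G_2+G_3+G_4}\cdot\frac{G_1}{G_1+G_2} + \frac{G_3+G_4}{G_1+G_2+G_3+G_4}\cdot\frac{G_3}{G_3+G_4} = \frac{G_1+G_3}{G_1+G_2+G_3+G_4}.
\end{equation}
Since $G_1+G_3 \sim \mathrm{Gam}(a_1+b_1,1)$ and $G_2+G_4 \sim \mathrm{Gam}(a_2+b_2,1)$ are independent gamma variables, the ratio $\pi = (G_1+G_3)/\big((G_1+G_3)+(G_2+G_4)\big)$ is exactly $\mathrm{Beta}(a_1+b_1,a_2+b_2)$ by the same gamma-to-beta fact, which is the claim.

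The only genuine obstacle is the independence claim: one must be careful that in the gamma construction $\eta_1$, $\eta_2$, $\eta_3$ really are independent, so that the constructed triple has the same joint distribution as the independent triple in the hypothesis (not merely the same marginals). This follows from the Dirichlet aggregation/decimation property — if $(G_1,G_2,G_3,G_4)$ are independent gammas, then $(G_1/S,\dots,G_4/S)$ with $S=\sum G_k$ is Dirichlet, independent of $S$, and the within-block normalized ratios together with the block-proportions are mutually independent — and I would state this explicitly as the one nontrivial ingredient, citing it or giving the one-line Jacobian argument. Everything else is the algebraic cancellation shown above, which is routine. As an alternative in case one prefers to avoid the gamma machinery, the same result can be obtained by conditioning on $\eta_3$ and computing the density of $\pi$ via the convolution formula for the sum of the two independent scaled beta variables, but the telescoping argument is shorter and more transparent, so I would lead with it.
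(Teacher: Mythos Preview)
Your argument is correct: the gamma representation gives the joint independence of $(\eta_1,\eta_2,\eta_3)$ via the beta--gamma algebra, the telescoping is exact, and the final ratio is $\mathrm{Beta}(a_1+b_1,a_2+b_2)$ as claimed. The paper, however, does not actually prove the lemma; it simply observes that it is the two-dimensional special case of Lemma~3.1 of \cite{Sethuraman:1994}, which is the Dirichlet aggregation identity $WU+(1-W)V\sim\mathrm{Dir}(\boldsymbol{\alpha}+\boldsymbol{\beta})$ for independent $U\sim\mathrm{Dir}(\boldsymbol{\alpha})$, $V\sim\mathrm{Dir}(\boldsymbol{\beta})$, $W\sim\mathrm{Beta}(\sum_i\alpha_i,\sum_i\beta_i)$. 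Your gamma/Dirichlet-decimation argument is in fact the standard way one proves Sethuraman's lemma itself, so rather than taking a genuinely different route you have supplied the proof that the paper outsources to a citation. The benefit of your version is that it is self-contained and makes the independence step explicit; the paper's version is shorter but relies on the reader knowing or looking up Sethuraman's result.
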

\begin{proof}
 This is a special case of Lemma 3.1 in \cite{Sethuraman:1994}.
\end{proof}

\subsection{Sampling from the posterior of the beta-Bernoulli process} 
The beta process is conjugate to the Bernoulli process \citep{Kim:1999}, which is evident using the following hierarchical representation for the beta-Bernoulli process (see the appendix for details)
\begin{equation}
X_i(d\theta)\, |\, H \stackrel{iid}{\sim} \rm{Bern}(H(d\theta)),\quad H(d\theta) \sim \rm{Beta}\{\alpha(\theta)\mu(d\theta),\alpha(\theta)(1-\mu(d\theta))\}. 
\end{equation}
Let the count statistics from $n$ independent Bernoulli processes be
$$M_1(d\theta) := \sum_{i=1}^n X_i(d\theta),\quad M_0(d\theta) := \sum_{i=1}^n \(1 - X_i(d\theta)\).$$
Using the count functions $M_1$ and $M_0$, the posterior distribution of $H$ is
\begin{equation}\label{eqn.Hpost}
H(d\theta)\,|\,X_1,\dots,X_n \sim \rm{Beta}\{\alpha(\theta)\mu(d\theta) + M_1(d\theta),\alpha(\theta)(1-\mu(d\theta)) + M_0(d\theta)\}.
\end{equation}
In light of Lemma \ref{lem.prod_beta}, we can sample from the distribution in Equation (\ref{eqn.Hpost}) as follows,
\begin{equation}
H(d\theta)\,|\,X_1,\dots,X_n = \eta(d\theta)P(d\theta) + (1-\eta(d\theta))H'(d\theta),\nn
\end{equation}
\begin{equation}
\eta(d\theta) \ind \rm{Beta}(n,\alpha(\theta)),\quad P(d\theta) \ind \rm{Beta}\(M_1(d\theta),M_0(d\theta)\),\quad H' \ind \rm{BP}(\alpha,\mu).
\end{equation}

Therefore, drawing from the posterior involves sampling from an uncountably infinite set of beta distributions. Fortunately we can separate this into an observed countable set of atoms, and the remaining unobserved and uncountable locations \citep{Kingman:1967}. Let $\rm{\Omega} = \{\theta : M_1(d\theta) > 0\}$. Then $\rm{\Omega}$ is almost surely finite \citep{Thibaux:2007} and can be referred to as the set of ``observed'' atoms. We note that for all $\theta \not\in \rm{\Omega}$, $P(d\theta) = 0$ a.s., whereas $P(d\theta) > 0$ a.s.\ if $\theta \in \rm{\Omega}$. $H'(d\theta) = 0$ a.s. for all $\theta \in \rm{\Omega}$ since $\mu$ is diffuse. To sample from this posterior, the weighted probabilities $\eta(d\theta)P(d\theta)$ can be drawn first for all $\theta \in \rm{\Omega}$. Then $H'$ can be drawn using a truncated stick-breaking construction of the beta process following which an atom $\theta \in H'$ is weighted by $1-\eta(\theta) \sim \rm{Beta}(\alpha(\theta),n)$ This can be drawn separately since an atom 
in $H'$ is a.s.\ not in $\rm{\Omega}$.\footnote{A similar posterior sampling method can be used for Dirichlet processes as well.}

\subsection{Sampling from the posterior of the beta-NB process} 
Sampling from the posterior of the beta-negative Binomial process follows a procedure almost identical to the beta-Bernoulli process. In this case the hierarchical process is
\begin{equation}
X_i(d\theta) | H \stackrel{iid}{\sim} \rm{NegBin}(r,H(d\theta)),\quad H(d\theta) \sim \rm{Beta}\{\alpha(\theta)\mu(d\theta),\alpha(\theta)(1-\mu(d\theta))\}. 
\end{equation}
Therefore, the posterior distribution is
\begin{equation}\label{eqn.Hpost2}
H(d\theta)\,|\,X_1,\dots,X_n \sim \rm{Beta}\{\alpha(\theta)\mu(d\theta) + \textstyle\sum_i X_i(d\theta),\alpha(\theta)(1-\mu(d\theta)) + nr\}.
\end{equation}
Again using Lemma \ref{lem.prod_beta}, we can sample from the distribution in Equation (\ref{eqn.Hpost2}) as follows,
\begin{equation}
H(d\theta)\,|\,X_1,\dots,X_n = \eta(d\theta)P(d\theta) + (1-\eta(d\theta))H'(d\theta),
\end{equation}
\begin{equation}
\eta(d\theta) \ind \rm{Beta}(\textstyle\sum_i X_i(d\theta)+nr,\alpha(\theta)),~ P(d\theta) \ind \rm{Beta}\(\textstyle\sum_i X_i(d\theta),nr\),~ H' \ind \rm{BP}(\alpha,\mu).\nn
\end{equation}

As with the beta-Bernoulli process, we can separate this into sampling the observed and unobserved atoms; an atom $\theta$ is again considered to be observed and contained in the set $\rm{\Omega}$ if $\sum_i X_i(d\theta) > 0$ and $\rm{\Omega}$ is again a.s.\ finite \citep{Broderick:2012a}. A sample from the posterior of $H$ can be constructed by adding delta measures $\eta(d\theta_k)P(d\theta_k)\delta_{\theta_k}$ at each $\theta_k \in \rm{\Omega}$. We construct the remaining atoms by sampling $H'$ from a truncated stick-breaking construction of the beta process and down-weighting the measure on an atom $\theta \in H'$ by multiplying $H'(d\theta)$ with a $\rm{Beta}(\alpha(\theta),nr)$ random variable.

\section{Conclusion}\label{sec.conclusion}
We have presented a constructive definition of the beta process.  We showed how this construction follows naturally by using the special case of the \cite{Sethuraman:1994} construction applied to the beta distribution. To this end, we presented a finite approximation of the beta process and proved that it has the correct limiting distribution. We also proved and gave further truncation analysis of the construction using the Poisson process theory. In the final section, we showed how posterior sampling of the beta process can be done easily by separating the atomic from the non-atomic parts of the posterior distribution. This produces a set of atoms whose weights can be sampled from an atom-specific beta posterior distribution, and a sample from the beta process prior using the stick-breaking construction that is subsequently down-weighted.

\bibliographystyle{sjs}
\bibliography{BP_stick}

\section*{Appendix}

\subsection*{A1. Derivation of the almost sure truncation bound}

We derive the inequality of Equation (\ref{eqn.bound}) by calculating the total variation distance between the marginal distributions of data $\Y := Y_1,\dots,Y_n$ under a beta-Bernoulli and beta-negative binomial process, and those same processes with the stick-breaking construction for the underlying beta process truncated after group $R$, denoted $H^{(R)}$. We write these marginals as $\m_{\infty}(\Y)$ and $\m_{R}(\Y)$ respectively. 

For the sequence of Bernoulli or negative binomial processes $\X := X_1,\dots,X_n$, we let $\boldsymbol{\pi}_{\infty}(\X)$ be the marginal distribution of the selected process under a beta process prior, and $\boldsymbol{\pi}_{R}(\X)$ the corresponding marginal under an $R$-truncated beta process. The domain of integration for $\X$ is $\{0,1\}^{\infty\times n}$ for the Bernoulli process and $\{0,\infty\}^{\infty\times n}$ for the negative binomial process. We write this generically as $\{0,y\}^{\infty\times n}$ below. The total variation of these two marginal distributions can be bounded as follows,
\begin{eqnarray}\label{eqn.TV1}
 \int_{\rm{\Omega}_{\Y}}|\m_{\infty}(\Y)-\m_{R}(\Y)|d\Y &=& \int_{\rm{\Omega}_{\Y}} \left|\sum_{\X\in\{0,y\}^{\infty \times n}}\prod_{i=1}^n f(Y_i|X_i)\{\boldsymbol{\pi}_{R}(\X)-\boldsymbol{\pi}_{\infty}(\X)\}\right|d\Y\nn\\
 &\leq& \sum_{\X\in\{0,y\}^{\infty \times n}} \int_{\rm{\Omega}_{\Y}}\prod_{i=1}^n f(Y_i|X_i)d\Y \left|\boldsymbol{\pi}_{R}(\X)-\boldsymbol{\pi}_{\infty}(\X)\right|\nn\\
 &=& \sum_{\X\in\{0,y\}^{\infty \times n}}\left|\boldsymbol{\pi}_{R}(\X)-\boldsymbol{\pi}_{\infty}(\X)\right|.
\end{eqnarray}
We observe that the rows of $\X$ are independent under both priors. Let $\X_R$ be the first $\tilde{R} := \sum_{i=1}^R C_i$ rows of $\X$, which is random and a.s.\ finite, and let $\X_{R+}$ be the remaining rows. Under the truncated prior, $\boldsymbol{\pi}_R(\X_{R+}=\boldsymbol{0}) = 1$, while the two processes share the same measure for $\X_R$, $\boldsymbol{\pi}_{R}(\X_R)=\boldsymbol{\pi}_{\infty}(\X_R)$. The sequence in (\ref{eqn.TV1}) continues as follows,
\begin{eqnarray}
 \sum_{\X}\left|\boldsymbol{\pi}_{R}(\X)-\boldsymbol{\pi}_{\infty}(\X)\right| &=& \sum_{\substack{\X_{R}\in\{0,y\}^{\tilde{R} \times n} \\ \X_{R+}\in\{0,y\}^{\infty \times n}}}\left|\boldsymbol{\pi}_{R}(\X_R)\boldsymbol{\pi}_{R}(\X_{R+})-\boldsymbol{\pi}_{\infty}(\X_R)\boldsymbol{\pi}_{\infty}(\X_{R+})\right|\nn\\
 &=&\sum_{\substack{\X_{R}\in\{0,y\}^{\tilde{R} \times n} \\ \X_{R+} = \boldsymbol{0}}}\left|\boldsymbol{\pi}_{R}(\X_R)-\boldsymbol{\pi}_{\infty}(\X_R)\boldsymbol{\pi}_{\infty}(\X_{R+})\right|\nn\\
 &&+ \sum_{\substack{\X_{R}\in\{0,y\}^{\tilde{R} \times n} \\ \X_{R+} \neq \boldsymbol{0}}} \boldsymbol{\pi}_{\infty}(\X_{R})\boldsymbol{\pi}_{\infty}(\X_{R+})\nn\\
 &=& 1 - \boldsymbol{\pi}_{\infty}(\X_{R+}=\boldsymbol{0}) + \boldsymbol{\pi}_{\infty}(\X_{R+}\neq\boldsymbol{0})\nn\\
 &=& 2(1-\boldsymbol{\pi}_{\infty}(\X_{R+}=\boldsymbol{0})).
\end{eqnarray}
The result follows by observing that this is two times the probability of the event considered in Equation (\ref{eqn.bound}).

\subsection*{A2. An alternate definition of the beta process}

Though the form of the L\'{e}vy measure $\lambda$ in Equation (\ref{eqn.BP_mean_measure}) is suggestive of a beta distribution, the following second definition of the beta process makes the relationship of the beta process to the beta distribution more explicit in the infinitesimal case.

\begin{definition}[The beta process II]\label{def.bp2}
Let $\mu$ be a diffuse $\sigma$-finite measure on $(\rm{\Theta},\mathcal{A})$ and let $\alpha(\theta)$ be a finite strictly positive function on $\rm{\Theta}$. For all infinitesimal sets $d\theta \in \mathcal{A}$, let
$$H(d\theta) \ind \rm{Beta}\{\alpha(\theta)\mu(d\theta),\alpha(\theta)(1-\mu(d\theta))\}.$$
Then $H$ is a beta process, denoted $H \sim \BP(\alpha,\mu)$.
\end{definition}

It isn't immediately obvious that Definitions \ref{def.bp1} and \ref{def.bp2} are of the same stochastic process. This was proved by \cite{Hjort:1990} in the context of survival analysis, where $\mu$ was a measure on $\R_+$, and the proof there heavily relied on this one-dimensional structure. We give a proof of this equivalence, stated in the following theorem, using more general spaces.

\begin{theorem}
 Definitions \ref{def.bp1} and \ref{def.bp2} are of the same stochastic process.
\end{theorem}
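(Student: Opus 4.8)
The plan is to show that Definitions \ref{def.bp1} and \ref{def.bp2} produce completely random measures with the same Laplace functional, and then to invoke the fact that a completely random measure with no deterministic part and no fixed atoms is determined by its Laplace functional (\cite{Kingman:1993}; \cite{Cinlar:2011}). For Definition \ref{def.bp1} this functional is Equations (\ref{eqn.BP_laplace})--(\ref{eqn.BP_mean_measure}), so it suffices to compute $\E\,\e^{tH(A)}$, $t<0$, for every compact $A\in\mathcal{A}$ when $H$ is given by Definition \ref{def.bp2}, and to check agreement. Because $\alpha(\theta)$ need not be constant, the first step is to make Definition \ref{def.bp2} precise exactly as Hjort's one-dimensional construction is: fix compact $A$ (so $\mu(A)<\infty$); using diffuseness of $\mu$, for each $n$ pick a measurable partition $A=\bigsqcup_{k=1}^{m_n}A_{n,k}$ with $\max_k\mu(A_{n,k})\le 1/n$ and tags $\theta_{n,k}\in A_{n,k}$; set $H_n(A)=\sum_{k}\pi_{n,k}$ with the $\pi_{n,k}$ independent and $\pi_{n,k}\sim\mathrm{Beta}(\alpha(\theta_{n,k})\mu(A_{n,k}),\alpha(\theta_{n,k})(1-\mu(A_{n,k})))$; and take the law of $H$ to be the limit in distribution of the $H_n$, which is consistent across $A$ and across refinements and hence defines a genuine completely random measure.

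The engine of the proof is the computation already performed in the proof of Theorem \ref{thm.finite_convergence}: with $\mu(A)/K$ replaced by $\mu(A_{n,k})$ and $\alpha$ by $\alpha(\theta_{n,k})$, Equations (\ref{eqn.limit_proof3})--(\ref{eqn.limit_proof5}) yield
\begin{equation}\nonumber
 \E\,\e^{t\pi_{n,k}}=1+\mu(A_{n,k})\,g_t(\theta_{n,k})+r_{n,k},\qquad g_t(\theta):=\int_0^1\big(\e^{t\pi}-1\big)\alpha(\theta)\pi^{-1}(1-\pi)^{\alpha(\theta)-1}\,d\pi .
\end{equation}
Using $1-\e^{t\pi}\le|t|\pi$ one gets $|g_t|\le|t|$ and, from the elementary bound $\prod_{r=1}^{s-1}\frac{\alpha p+r}{\alpha+r}-\prod_{r=1}^{s-1}\frac{r}{\alpha+r}\le(s-1)p$, also $|r_{n,k}|\le|t|\,\e^{|t|}\,\mu(A_{n,k})^2$ --- both \emph{uniform in $\theta$}. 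Since $\sum_k\mu(A_{n,k})^2\le\mu(A)/n\to0$, the remainders are negligible, the factors stay uniformly close to $1$, and
\begin{equation}\nonumber
 \E\,\e^{tH_n(A)}=\prod_{k}\E\,\e^{t\pi_{n,k}}\;\longrightarrow\;\exp\Big\{\lim_{n}\textstyle\sum_k\mu(A_{n,k})\,g_t(\theta_{n,k})\Big\}.
\end{equation}

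To identify the exponent, note $g_t$ is bounded and $\mathcal{A}$-measurable (it is a continuous function of $\alpha(\theta)$). Choosing the partitions to simultaneously refine a fine subdivision of the range of $g_t$ and, by diffuseness, to consist of cells of $\mu$-measure $\le 1/n$ (with tags taken in the matching range-band), the Riemann-type sums converge: $\sum_k\mu(A_{n,k})g_t(\theta_{n,k})\to\int_A g_t(\theta)\mu(d\theta)=-\int_{A\times[0,1]}\nu(d\theta,d\pi)(1-\e^{t\pi})$ with $\nu$ as in (\ref{eqn.BP_mean_measure}). Hence $\E\,\e^{tH(A)}$ under Definition \ref{def.bp2} equals (\ref{eqn.BP_laplace}) for every compact $A$ and every $t<0$; complete randomness (independence over disjoint cells survives the limit) pins down all finite-dimensional distributions of $(H(A_1),\dots,H(A_m))$ over disjoint compact sets; the $\sigma$-finite case follows from the partition $(E_k)$ as in Corollary \ref{cor.BPstick}; and uniqueness of a completely random measure given its Laplace functional completes the proof.

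I expect the main obstacle to be exactly the non-constant concentration. On $\R_+$ (Hjort's setting) the partition into intervals is canonical and $\alpha$ is locally constant along the line, so there is essentially nothing to check. On a general space one must (i) decide what the Definition \ref{def.bp2} process \emph{is} --- a distributional limit of discretized beta processes --- and verify consistency; (ii) control, uniformly in $\theta$, both the error of replacing $\alpha(\cdot)$ by a locally constant value on each cell and the $O(\mu(A_{n,k})^2)$ remainder in the per-cell Laplace transform; and (iii) arrange the partitions so that the resulting Riemann sums converge to $\int_A g_t\,d\mu$. None of these requires a new identity --- they are handled by the bound $1-\e^{t\pi}\le|t|\pi$ and by care in the discretization --- but this bookkeeping, not the algebra, is the substance of the theorem.
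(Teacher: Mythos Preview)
Your proposal is correct and follows essentially the same route as the paper: both arguments reduce to the per-cell computation of the beta Laplace transform already carried out in the proof of Theorem~\ref{thm.finite_convergence} (Equations~(\ref{eqn.limit_proof3})--(\ref{eqn.limit_proof5})), extract the $O(\mu(\text{cell}))$ leading term, discard the $O(\mu(\text{cell})^2)$ remainder, and assemble the pieces into the Laplace functional~(\ref{eqn.BP_laplace}). The only substantive difference is in how the passage to ``infinitesimal sets'' is formalized. The paper works directly in the product-integral calculus of \cite{Volterra:1959}, writing $\E\,\e^{tH(A)}=\prod_{d\theta\in A}\E\,\e^{tH(d\theta)}$ and invoking the identity $\prod_{d\theta\in A}(1+\kappa(d\theta))=\exp\int_A\kappa(d\theta)$ for diffuse $\kappa$; this makes the non-constant $\alpha(\theta)$ come for free at the level of notation. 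You instead make Definition~\ref{def.bp2} precise as a distributional limit over refining partitions with tags, bound the remainders uniformly in $\theta$, and identify the exponent as a Riemann sum for $\int_A g_t\,d\mu$. Your version is more explicit about what needs checking (consistency of the limit, uniform remainder control, convergence of the tagged sums), while the paper's version is shorter because the product-integral formalism packages exactly that bookkeeping; but the underlying computation and the idea of the proof are the same.
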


\begin{proof}
We prove the equivalence of Definitions \ref{def.bp1} and \ref{def.bp2} by showing that the the Laplace transform of $H(A)$ from Definition \ref{def.bp2} has the form of Equation (\ref{eqn.BP_laplace}) with the mean measure given in Equation (\ref{eqn.BP_mean_measure}). Since their Laplace transforms are equal, the equivalence follows. The proof essentially follows the same pattern as the proof of Theorem \ref{thm.finite_convergence} with a change in notation to account for working with measures on infinitesimal sets as opposed to an asymptotic analysis of a finite approximation. 

Let $H$ be as in Definition \ref{def.bp2}. We calculate the Laplace transform of $H(A) = \int_{d\theta\in A} H(d\theta)$, where by definition $H(d\theta)$ is beta-distributed with parameters $\alpha(\theta)\mu(d\theta)$ and $\alpha(\theta)(1-\mu(d\theta))$. For $t < 0$, the Laplace transform of $H(A)$ can be written as
\begin{eqnarray}
 \E\,\e^{\int_{d\theta\in A} tH(d\theta)} &=& \prod_{d\theta\in A} \E\,\e^{tH(d\theta)}\label{eqn.bp_proof0}\\
&=& \prod_{d\theta\in A}\(1 + \sum_{k=1}^{\infty}\frac{t^k}{k!}\prod_{r=0}^{k-1}\frac{\alpha(\theta) \mu(d\theta) + r}{\alpha(\theta) + r}\).\label{eqn.bp_proof1}
\end{eqnarray}
The first equality uses the property that an exponential of a sum factorizes into a product of exponentials, which has the given product integral extension. The independence in Definition \ref{def.bp2} allows for the expectation to be brought inside the product. The second equality is the Laplace transform of a beta random variable with the given parameterization.

The remainder is simply a manipulation of Equation (\ref{eqn.bp_proof1}) until it reaches the desired form. We first present the sequence of equalities, followed by line-by-line explanations. Continuing from Equation (\ref{eqn.bp_proof1}),
\begin{eqnarray}
 \E\,\e^{H(A)} &=& \prod_{d\theta\in A}\(1 + \mu(d\theta)\sum_{k=1}^{\infty}\frac{t^k}{k!}\prod_{r=1}^{k-1}\frac{r}{\alpha(\theta) + r}\)\label{eqn.bp_proof2}\\
&=& \prod_{d\theta\in A} \(1 + \alpha(\theta)\mu(d\theta)\sum_{k=1}^{\infty}\frac{t^k}{k!} \frac{\rmGamma(k)\rmGamma(\alpha(\theta))}{\rmGamma(\alpha(\theta)+k)}\)\label{eqn.bp_proof3}\\
&=& \prod_{d\theta\in A} \(1 + \alpha(\theta)\mu(d\theta)\sum_{k=1}^{\infty}\frac{t^k}{k!}\int_0^1 \pi^{k-1}(1-\pi)^{\alpha(\theta)-1}d\pi\)\label{eqn.bp_proof4}\\
&=& \prod_{d\theta\in A} \(1 + \alpha(\theta)\mu(d\theta)\int_0^1\(\textstyle\sum_{k=1}^{\infty}\frac{(t\pi)^k}{k!}\) \pi^{-1}(1-\pi)^{\alpha(\theta)-1}d\pi\)\label{eqn.bp_proof5}\\
&=& \prod_{d\theta\in A} \(1 + \alpha(\theta)\mu(d\theta)\int_0^1(\e^{t\pi}-1) \pi^{-1}(1-\pi)^{\alpha(\theta)-1}d\pi\).\label{eqn.bp_proof6}
\end{eqnarray}
We derive this sequence as follows: Equation (\ref{eqn.bp_proof2}) uses the fact that $\mu$ is a diffuse measure---we can pull out the $r=0$ term and disregard any terms with $\mu(d\theta)^k$ for $k > 1$ since this integrates to zero; Equation (\ref{eqn.bp_proof3}) uses the equality $\rmGamma(c+1) = c\rmGamma(c)$; Equation (\ref{eqn.bp_proof4}) recognizes the fraction of gamma functions as the normalizing constant of a beta distribution with parameters $k$ and $\alpha(\theta)$; Equation (\ref{eqn.bp_proof5}) uses monotone convergence and Fubini's theorem to swap the summation and integral, which simplifies to Equation (\ref{eqn.bp_proof6}) as a result of the exponential power series.

For the final step, we invert the product integral back into an exponential function of an integral; for a diffuse measure $\kappa$, we use the product integral equality \citep{Volterra:1959}
$$\textstyle\prod_{d\theta\in A}(1+\kappa(d\theta)) = \exp\textstyle\int_{d\theta\in A}\ln(1+\kappa(d\theta)) = \exp\textstyle\int_{d\theta\in A}\kappa(d\theta).$$ 
We note that the result of Equation (\ref{eqn.bp_proof6}) satisfies this condition since the right-most term, which corresponds to $\kappa(d\theta)$, is a finite number multiplied by an infinitesimal measure.\footnote{To give a sense of how this equality arises, we note that $\ln(1+\kappa(d\theta)) = \sum_{n=1}^{\infty} \frac{(-1)^{n+1}}{n!}\kappa(d\theta)^n$ since $|\kappa(d\theta)| < 1$, and all terms with $n > 1$ integrate to zero due to a lack of atoms, leaving only $\kappa(d\theta)$.} In light of Equation (\ref{eqn.bp_proof6}), the above product integral equality leads to
\begin{equation}
\E\,\e^{tH(A)} = \exp\left\lbrace-\int_{d\theta\in A}\int_0^1(1-\e^{t\pi}) \alpha(\theta)\pi^{-1}(1-\pi)^{\alpha(\theta)-1}d\pi \mu(d\theta)\right\rbrace\label{eqn.bp_proof7}.
\end{equation}
This is the Laplace functional of the integral form of the beta process given in Definition \ref{def.bp1} using Poisson random measures. By uniqueness of the Laplace functional, the $H$ given in both definitions share the same law, and so they are equivalent.
\end{proof}

\subsection*{A3. A second proof of Theorem \ref{thm.truncations} using the Poisson process}
 Let $U\in\{0,1\}^M$. By the marking theorem for Poisson processes, the set $\{(\theta,\pi,U)\}$ constructed from groups $R+1$ and higher is a Poisson process on $\rm{\Theta}\times [0,1]\times\{0,1\}^M$ with mean measure $\nu_R^+(d\theta,d\pi)Q(\pi,U)$ and a corresponding counting measure $N_R^+(d\theta,d\pi,U)$, where $Q(\pi,\cdot)$ is a transition probability measure on the space $\{0,1\}^M$. Let $A = \{0,1\}^M \backslash \boldsymbol{0}$, where $\boldsymbol{0}$ is the zero vector. Then $Q(\pi,A)$ is the probability of this set with respect to a Bernoulli process with parameter $\pi$, and therefore $Q(\pi,A) = 1 - (1-\pi)^M$. The probability $\mathbb{P}(E)$ equals $1 - \mathbb{P}(E^c)$, which is equal to $1 - \mathbb{P}(N_R^+(\rm{\Theta},[0,1],A)=0)$. The theorem follows since $N_R^+(\rm{\Theta},[0,1],A)$ is a Poisson-distributed random variable with parameter $\int_{(0,1]}\nu_R^+(\rm{\Theta},d\pi)Q(\pi,A)$.$\hfill\square$

\end{document}